\newtheorem{theorem}{Theorem}[section]
\newtheorem{corollary}[theorem]{Corollary}
\newtheorem{definition}[theorem]{Definition}
\newtheorem{lemma}[theorem]{Lemma}
\newtheorem{proposition}[theorem]{Proposition}
\newtheorem{remark}[theorem]{Remark}
\newtheorem{example}[theorem]{Example}
\newcommand{\vanish}[1]{}\parskip=12pt
\def\p{\prime}
\def\R{\mathcal{R}}
\def\Z{\mathbb{Z}}
\def\B{\backslash}
\def\C{\mathcal{C}}
\def\D{\mathcal{D}}
\def\H{\mathcal{H}}
\numberwithin{equation}{section}
\begin{document}

\title[Relative Tutte Polynomials for Colored Graphs]
{Relative  Tutte Polynomials for Colored
Graphs
and Virtual Knot Theory}
\author{Y. Diao and G. Hetyei}
\address{Department of Mathematics and Statistics, UNC Charlotte,
    Charlotte, NC 28223}
\email{ydiao@uncc.edu, ghetyei@uncc.edu}
%\thanks{$^\dag$supported by NSF grant DMS-0712958.
%$^\ast$supported by NSA grant \# H98230-07-1-0073} \dedicatory{}
\subjclass{Primary: 05C15; Secondary: 57M25}
\keywords{Tutte polynomials, colored graphs, knots, Kauffman
bracket polynomials, Jones polynomials,  virtual knots.}

\begin{abstract}
We introduce the concept of a relative Tutte
polynomial of colored graphs. We show that this relative Tutte
polynomial can be computed in a way similar to the classical spanning
tree expansion used by Tutte in his original paper on this subject. We
then apply the relative Tutte polynomial to virtual knot theory. More
specifically, we show that the Kauffman bracket polynomial (hence
the Jones polynomial) of a virtual knot can be computed from the
relative Tutte polynomial of its face (Tait) graph with some
suitable variable substitutions. Our method offers an alternative
to the ribbon graph approach, using the face graph obtained from the
virtual link diagram directly.
\end{abstract}

\maketitle
\section{Introduction}

In this paper, we introduce and study the {\em relative Tutte polynomial} for a colored graph, a generalization of the (ordinary) Tutte polynomial for a colored graph. Let $G$ be a connected
graph. An important property that a classical Tutte polynomial possesses
is that it can be computed through contracting and deleting the edges of
$G$ in an arbitrary order. Assume now that $\H$ is a subset of the edges of $G$. We would like to define the relative Tutte polynomial $T_\H$ of $G$ with respect to $\H$ so that it would have such similar property, but only for the edges of $G$ not in $\H$. More specifically, we would like to be able to compute the relative Tutte polynomial $T_\H(G)$ through two stages: first through the contracting/deleting process on edges of $G$ that are not in $\H$, then assign variables to the remaining graphs (whose edges are all from $\H$) with a totally different rule. The variable assigning rule in the second stage can be quite arbitrary and that is what makes the relative Tutte polynomial more general and different from the (ordinary) Tutte polynomial.

\medskip
Tutte defined his polynomial of an un-colored graph~\cite{Tu} in
terms of counting {\em activities} with respect to a specific
labeling of the edges of the graph, and his main result in \cite{Tu} is showing that the polynomial he introduced can be computed through a spanning tree expansion by counting activities of the edges with respect to the spanning trees and a given labeling of the edges and that the polynomial is actually independent of the labeling, thus truly an invariant of the graph. This is equivalent to saying that the Tutte polynomial can be computed through the contracting/deleting process and the order of edges appearing in this process does not matter. The greatest challenge in generalizing Tutte's polynomial to colored graphs is to preserve the independence of the labeling. This challenge is
typically met by considering the Tutte polynomial of a colored
graph as an element of a polynomial ring modulo certain relations
between the variables. The most general result here is due
to Bollob\'as and Riordan~\cite{BR}, who give a necessary and
sufficient set of relations modulo which a Tutte polynomial of a
colored graph is labeling independent. It turns out that the relative Tutte polynomial defined in this paper also possesses this property. That is, it can also be computed by a process similar to the spanning tree expansion by counting the activities of the edges of $G$ (that are not in $\H$)
with respect to a given order of the edges of $G$ (that are not in
$\H$). Furthermore, the polynomial so defined is also independent
of the order on the edges. It is worthwhile to point out that
since our polynomial is defined on colored graphs, it also
generalizes the {\em set-pointed} Tutte polynomial introduced and
discussed in \cite{Ver} under the graph theoretical setting.
Our approach follows closely the one used in Bollob\'as
and Riordan \cite{BR} and our result is analogous to that
of \cite{BR}. In other words, our result is also the most general
in the sense that we have given a necessary and sufficient set of
relations modulo which a relative Tutte polynomial of a colored
graph is labeling independent.

Our main motivation to introduce and study the relative Tutte polynomial
comes from knot theory. It is well-known that the Jones polynomial of a
link can be computed from the Kauffman bracket polynomial. On the other
hand, the Kauffman bracket polynomial of a link can be computed from the
(signed) Tutte polynomial of the face graph of a regular
projection of the link. This was first shown for alternating links and
the ordinary Tutte polynomial by Thistlethwaite~\cite{T0}, then
generalized to arbitrary links and a signed Tutte polynomial by
Kauffman~\cite{K2}. This enables applications of the
ordinary Tutte polynomials and their signed generalizations to
classical knot theory such as those in \cite{DEZ,DGH1,Ja}.
For virtual knots the situation is a little more complicated.
An appropriate generalization of the Kauffman bracket polynomial
was developed by Kauffman himself~\cite{K3}. However, until very
recently, no appropriate generalization of the Tutte polynomial to face
graphs of virtual links was known. In a series of papers, Chmutov,
Pak and Voltz \cite{Ch,CP,CV} developed a generalization of
Thistlethwaite's theorem first to checkerboard-colorable \cite{CP} then
to arbitrary \cite{Ch,CV} virtual link diagrams. These express the
Jones polynomial of a virtual link in terms of a signed generalization
of the Bollob\'as-Riordan polynomial~\cite{BR2,BR3} of a ribbon graph,
obtained from the virtual link diagram. In this paper we will show that
a relative variant of the {\em other} generalization of the Tutte
polynomial, also due to Bollob\'as and Riordan~\cite{BR} may also be
used to compute the Jones polynomial of a virtual link, this time
directly from the face graph of the virtual link diagram.
The application of the relative Tutte polynomials is not just
limited to virtual knot theory. We remind the reader that the
Bollob\'as-Riordan polynomial defined in~\cite{BR} is in a sense
the most general Tutte polynomial that may be defined for colored
graphs~\cite[Theorem 2]{BR}. Several examples of less general Tutte
polynomials arising as a homomorphic image of this colored Tutte polynomial
are given in~\cite{BR}, and the relative variant of the colored
Bollob\'as-Riordan Tutte polynomial may also be used in the study of all
models where the original, non-relative variant proved itself useful.
For example, it can be applied to
networks with different layers of structures as well, generalizing some
results in \cite{FK}.

This paper is organized in the following way. In the Preliminaries we review the main result of Bollob\'as and
Riordan~\cite{BR}, providing in a sense the most general notion of
a colored Tutte polynomial that is labeling independent if we generalize
Tutte's original approach~\cite{Tu} of counting activities. We
then turn to the introduction and discussions of the relative
Tutte polynomial of a colored graph in Section \ref{s3}. There we
state and prove our main theorem about the relative Tutte
polynomial for connected graphs and matroids. In Section \ref{s4}, we extend the relative Tutte polynomial to disconnected graphs.  We also give some examples of the relative Tutte polynomials in this section. In one example, we show how an ordinary colored Tutte polynomial can be recovered from a relative Tutte polynomial. In Section \ref{s5}, we apply the relative Tutte polynomial to virtual knot theory. There we will state and prove our main theorem in the application of the relative Tutte polynomial.

\bigskip
\section{Preliminaries}\label{s2}

\subsection{Matroids associated to graphs}

The Tutte polynomial and its colored generalizations are {\em matroid
invariants}. There are many equivalent definitions of a matroid, a good
basic reference is \cite{We}. A matroid is {\em graphic} if its
elements may be represented by the edges of a graph $G$ such that the
minimal dependent sets are the cycles in the graph. Independent sets
correspond then to forests, maximal independent sets or bases to
spanning forests. If the graph is connected then the bases are the
spanning trees. Since the matroid associated to a graph depends only on
the cycle structure, two graphs have the same underlying matroid
structure if they have the same $2$-edge connected components.
The Tutte polynomial and the generalizations we consider are matroid
invariants in the sense that they depend only on the matroid associated
to the graph. Most statements and proofs we make or were made about
(generalized) Tutte polynomials of graphs may be easily generalized to
matroids.

In this paper we will often rely on the notion of {\em matroid
  duality}.  Perhaps the easiest way to define the dual ${\mathcal M}^*$ of
a matroid ${\mathcal M}$ is by giving the maximal independent sets of
${\mathcal M}^*$: the set $B^*$ is a basis in ${\mathcal M}^*$ if and
only if its complement is a basis in ${\mathcal M}$. It is well-known
that the deletion and contraction operations are duals of each
  other. The dual of  a graphic matroid is not necessarily graphic,
  in fact, only planar graphs have a dual graph. However, the dual
  notion of a cycle in a graph is well known. Assume that a matroid is
  represented as the cycle matroid of a connected graph. Then a set is a
  cocycle (=cycle in the dual) if and only if it is a minimal cut,
  i.e., a minimal disconnecting set. If we make a pair of dual statements
  about a graph, it is sufficient to prove only one of the two
  statements if that proof generalizes immediately to matroids. The
  proof of the dual statement may be obtained by replacing each notion
  with its dual in the proof of the original statement.

\subsection{Tutte polynomials}

In this section, we review the results of Bollob\'as and Riordan~\cite{BR} concerning the Tutte polynomial for a colored
connected graph, as well as some results we had obtained in our earlier work \cite{DGH2}. A graph $G$
with vertex set $V$ and edge set $E$ is a colored graph if every
edge of $G$ is assigned a value from a color set $\Lambda$. The
following notion of ``activities'' was first introduced by
Tutte~\cite{Tu} for non-colored graphs to express the ordinary
Tutte polynomial as a sum of contributions over all spanning trees
of a connected graph.

\begin{definition}
Let $G$ be a connected graph with edges labeled $1, 2, \ldots,
n$, and let $T$ be a spanning tree of $G$. An edge $e$ of $T$ is
said to be internally active if for any edge $f\not=e$ in G such
that $(T \setminus  e ) \cup  f $ is a spanning tree of $G$, the
label of $e$ is less than the label of $f$. Otherwise $e$ is said
to be internally inactive. On the other hand, an edge $f$ of $G
\setminus T$ is said to be externally active if $f$ has the
smallest label among the edges in the unique cycle contained in $T
\cup   f  $. Otherwise, $f$ is said to be externally inactive.
\end{definition}

\vspace{-0.4cm} Bollob\'as and Riordan~\cite{BR} use Tutte's
notion of activities but generalize Tutte's variable assignments
as follows. Let $G$ be a colored and connected graph and $T$ a
spanning tree of $G$. For each edge $e$ in $G$ with color
$\lambda$, we assign one of the variables $X_\lambda$,
$Y_\lambda$, $x_\lambda$ and $y_\lambda$ to it according to the
activities of $e$ as shown below (with respect to the tree $T$):

\begin{table}[h]\label{table1}
\begin{center}
\begin{tabular}{|c|c|c|c|}
\hline internally active & $X_\lambda$& externally active &
$Y_\lambda$\\
\hline
internally inactive & $x_\lambda$& externally inactive & $y_\lambda$\\
\hline
\end{tabular}
\end{center}
\caption{The variable assignment of an edge with respect to a
spanning tree $T$.}
\end{table}

\begin{definition} Let $G$ be a connected colored graph. For a
spanning tree $T$ of $G$, let $C(T)$ be the product of the
variable contributions from each edge of $G$ according to the
variable assignment above, then the Tutte polynomial $T(G)$ is
defined as the sum of all the $C(T)$'s over all possible spanning
trees of $G$.
\end{definition}

Tutte's original variable assignment may be recovered by setting
all $X_{\lambda}=x$, $Y_{\lambda}=y$, $x_{\lambda}=1$ and
$y_{\lambda}=1$ for all $\lambda\in\Lambda$. It is Tutte's main
result, that the total contribution of all spanning trees is
labeling independent in the non-colored case. This property does
not generalize to the colored case. To remedy the situation, in
the definition of most colored Tutte polynomials in the literature
one needs to factor the polynomial ring ${\mathbb
Z}[\Lambda]:={\mathbb
Z}[X_\lambda,Y_\lambda,x_\lambda,y_\lambda:\lambda\in \Lambda]$
with an appropriate ideal $I$, such that the formula for $T(G)$ in
${\mathbb Z}[\Lambda]/I$ becomes labeling independent. An exact
description of all such ideals was given by Bollob\'as and
Riordan~\cite[Theorem 2]{BR}.

\begin{proposition}[Bollob\'as-Riordan]
\label{T_BR} Assume $I$ is an ideal of ${\mathbb Z}[\Lambda]$.
Then the homomorphic image of $T(G)$ in ${\mathbb Z}[\Lambda]/I$
is independent of the labeling of the edges of $G$ if and only if
\begin{equation}
\det\left(\begin{array}{ll}
X_\lambda & y_\lambda\\
X_\mu & y_\mu
\end{array}\right)
- \det\left(\begin{array}{ll}
x_\lambda & Y_\lambda\\
x_\mu & Y_\mu
\end{array}\right)\in I,
\end{equation}
\begin{equation}
Y_{\nu}\det\left(\begin{array}{ll}
x_\lambda & Y_\lambda\\
x_\mu & Y_\mu
\end{array}\right)
- Y_{\nu}\det\left(\begin{array}{ll}
x_\lambda & y_\lambda\\
x_\mu & y_\mu
\end{array}\right)\in I,
\end{equation}
and
\begin{equation}
X_{\nu}\det\left(\begin{array}{ll}
x_\lambda & Y_\lambda\\
x_\mu & Y_\mu
\end{array}\right)
- X_{\nu}\det\left(\begin{array}{ll}
x_\lambda & y_\lambda\\
x_\mu & y_\mu
\end{array}\right)\in I.
\end{equation}
hold for all $\lambda,\mu, \nu \in \Lambda$.
\end{proposition}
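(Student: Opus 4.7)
The plan is to prove both implications. For \textbf{necessity}, for each relation I exhibit a small test graph and a pair of edge labelings whose Tutte polynomials differ (up to a permutation of colors) by exactly the left-hand side of that relation; labeling independence then forces the relation to lie in $I$. For (2.1), take the multigraph on two vertices with two parallel edges of colors $\lambda$ and $\mu$. Its two spanning trees are the two single edges, and a direct computation gives $T(G) = X_\lambda y_\mu + x_\mu Y_\lambda$ in one edge ordering and $X_\mu y_\lambda + x_\lambda Y_\mu$ in the other; the difference is exactly the expression in~(2.1). For~(2.2), take three parallel edges of colors $\lambda, \mu, \nu$ and swap the labels of the $\mu$- and $\nu$-colored edges; a routine calculation shows the change in $T(G)$ equals $Y_\lambda$ times the expression in~(2.2), with the colors cyclically permuted. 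For~(2.3), take the dual configuration---a triangle with edges of colors $\lambda,\mu,\nu$---and perform the analogous swap; the resulting change equals $X_\lambda$ times $\det\bigl(\begin{smallmatrix}X_\mu & y_\mu \\ X_\nu & y_\nu\end{smallmatrix}\bigr) - \det\bigl(\begin{smallmatrix}x_\mu & y_\mu \\ x_\nu & y_\nu\end{smallmatrix}\bigr)$, which via~(2.1) is congruent modulo $I$ to the expression in~(2.3).

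For \textbf{sufficiency}, any two edge labelings differ by a sequence of transpositions of consecutive labels, so it suffices to verify that a single such transposition changes $T(G)$ only by an element of $I$. Fix a labeling $\sigma$ and produce $\sigma'$ by swapping consecutive labels $i$ and $i+1$ on edges $e,e'$ of colors $\lambda,\mu$. For each spanning tree $T$ and each edge $f$, I compare the activity of $f$ under $\sigma$ and under $\sigma'$. For $f\notin\{e,e'\}$, the label of $f$ differs from both $i$ and $i+1$, and a short case analysis (on whether $C^*_T(f)$ or $C_T(f)$ contains $e$, $e'$, both, or neither) shows that the activity of $f$ is unchanged: the relevant minimum label is always controlled either by a label strictly less than $i$ (unaffected by the swap) or by the pair $\{i,i+1\}$ itself (whose minimum is unchanged). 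A similar analysis shows the activities of $e$ and $e'$ themselves are unchanged when both lie in $T$, both lie outside $T$, or exactly one lies in $T$ but $(T\setminus e) \cup e'$ is not a spanning tree.

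The only remaining situation is when $e\in T$, $e'\notin T$, and $T' := (T\setminus e)\cup e'$ is itself a spanning tree (together with its partner case $e'\in T$, $e\notin T$). I pair $T$ with $T'$ and denote by $R_T$ the product of variable contributions of all edges other than $\{e,e'\}$ in the tree $T$. A case analysis on the minima $m$ of $C^*_T(e)\setminus\{e,e'\}$ and $m'$ of $C_T(e')\setminus\{e,e'\}$ (each being either $<i$ or $>i+1$) shows that the net change in the paired contribution $R_T\cdot C(e,e';T) + R_{T'}\cdot C(e,e';T')$ under $\sigma\mapsto\sigma'$ has the form
\begin{equation*}
(R_T - R_{T'})\bigl(X_\lambda y_\mu - x_\lambda Y_\mu\bigr)
+ R_{T'}\bigl[(X_\lambda y_\mu - X_\mu y_\lambda) - (x_\lambda Y_\mu - x_\mu Y_\lambda)\bigr]
\end{equation*}
(or an analogous expression, depending on $m$ and $m'$). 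The second summand lies in $I$ by~(2.1) alone. For the first summand, the change of tree forces $R_T - R_{T'}$ to be expressible as a sum of terms, each carrying an extra factor of some $X_\nu$ or $Y_\nu$ corresponding to an edge whose internal/external active status toggles as the spanning tree changes from $T$ to $T'$. Multiplication of such terms by the factor $X_\lambda y_\mu - x_\lambda Y_\mu$ is then absorbed into $I$ by relations~(2.2) and~(2.3).

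The main obstacle is this last step: writing $R_T - R_{T'}$ as an explicit sum in which each term contains an $X_\nu$ or $Y_\nu$ factor of the right shape to be killed by~(2.2) or~(2.3). This requires a careful telescoping argument moving along a sequence of spanning trees interpolating between $T$ and $T'$---exactly the kind of argument developed in~\cite{BR}. Once established, it confirms that the three families of relations are together sufficient, matching their necessity and completing the proof.
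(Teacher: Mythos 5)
A point of reference first: the paper does not prove this proposition at all --- it is quoted from Bollob\'as and Riordan \cite{BR} (their Theorem 2) --- so your attempt has to stand on its own rather than be measured against an argument in the text. Your necessity direction is essentially correct and is the standard one: the digon forces the first relation, and the bundle of three parallel edges and the triangle force the $Y_\nu$- and $X_\nu$-multiplied relations (the latter after reducing modulo the first relation), which is exactly how these relations arise in \cite{BR}.

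The sufficiency direction, however, contains a genuine gap, and it sits precisely at the step you yourself flag as ``the main obstacle''. Reducing to an adjacent transposition and pairing $T$ with $T'=(T\setminus e)\cup e'$ is the right strategy, and your observation that for a \emph{fixed} tree the swap does not change the activity of any third edge is fine; but the products $R_T$ and $R_{T'}$ of the contributions of the remaining edges with respect to the two \emph{different} trees need not agree (a third edge can be externally active for $T$ and externally inactive for $T'$), and your proposed mechanism for disposing of $R_T-R_{T'}$ cannot work as stated. Products such as $Y_\nu\,(X_\lambda y_\mu-x_\lambda Y_\mu)$ or $X_\nu\,(X_\lambda y_\mu-x_\lambda Y_\mu)$ do \emph{not} lie in the ideal generated by the three displayed families: the substitution $X_\lambda\mapsto x$, $Y_\lambda\mapsto y$, $x_\lambda\mapsto 1$, $y_\lambda\mapsto 1$ (for every color) annihilates all three families of generators, yet sends $Y_\nu(X_\lambda y_\mu-x_\lambda Y_\mu)$ to $y(x-y)\neq 0$. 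So the claim that ``each term of $R_T-R_{T'}$ carries an $X_\nu$ or $Y_\nu$ factor and is absorbed by the second and third relations'' is not merely unfinished --- term-by-term absorption of that factor is impossible, and a correct argument must control $R_T-R_{T'}$ differently (for instance by showing that whenever the $(e,e')$-factor genuinely changes under the swap the remaining contributions coincide, or by regrouping the spanning trees so that the discrepancies cancel, possibly at the price of the extra $X_\nu$, $Y_\nu$ factors appearing in a structurally different way). That analysis is the entire content of the sufficiency half of the Bollob\'as--Riordan theorem, and deferring it to ``exactly the kind of argument developed in \cite{BR}'' is circular, since that is the theorem being proved.
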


Bollob\'as and Riordan denote the ideal generated by the
differences listed in Proposition~\ref{T_BR} by $I_0$. The homomorphic
image of $T(G)$ in ${\mathbb Z}[\Lambda]/I_0$ is the most general
colored Tutte polynomial whose definition is independent of the
labeling. Many important polynomials may be obtained from this
most general colored Tutte polynomial by substitution, and most
such substitutions map ${\mathbb Z}[\Lambda]/I_0$ into an integral
domain in such a way that the image of the variables $x_{\lambda},
X_{\lambda}, y_{\lambda}$ and $Y_{\lambda}$ is nonzero. As it is
implicitly noted in~\cite[Corollary 3]{BR}, all such substitutions
factor through the canonical map ${\mathbb
Z}[\Lambda]/I_0\rightarrow {\mathbb
  Z}[\Lambda]/I_1$ where
$I_1$ is the ideal generated by all polynomials of the form
\begin{equation}
\label{E_I11} \det\left(\begin{array}{ll}
X_{\lambda}& y_{\lambda}\\
X_{\mu}& y_{\mu}
\end{array}\right)
- \det\left(\begin{array}{ll}
x_{\lambda}& y_{\lambda}\\
x_{\mu}& y_{\mu}
\end{array}\right)
\end{equation}
and
\begin{equation}
\label{E_I12} \det\left(\begin{array}{ll}
x_{\lambda}& y_{\lambda}\\
x_{\mu}& y_{\mu}
\end{array}\right)
- \det\left(\begin{array}{ll}
x_{\lambda}& Y_{\lambda}\\
x_{\mu}& Y_{\mu}
\end{array}\right).
\end{equation}
Since $I_1$ properly contains $I_0$, the canonical image of the
Tutte polynomial in ${\mathbb Z}[\Lambda]/I_1$ is
labeling independent. Moreover, we highlight the following
algebraic observation, making \cite[Corollary 3]{BR} truly useful.

\begin{lemma}\cite{DGH2}
\label{L_ip} The ideal $I_1$ is a prime ideal. More generally,
given any integral domain $\R$, the ideal $I_1$ generated by all
elements of the form (\ref{E_I11}) and (\ref{E_I12}) in
$\R[\Lambda]$ is prime.
\end{lemma}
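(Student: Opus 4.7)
The plan is to embed $\R[\Lambda]/I_1$ explicitly into a polynomial ring over $\R$, which, since $\R$ is an integral domain, is itself a domain, forcing $I_1$ to be prime. Introduce two fresh indeterminates $\alpha$ and $\beta$ and define the $\R$-algebra homomorphism
\[
\Phi\colon \R[\Lambda]\longrightarrow \R[x_\lambda,y_\lambda,\alpha,\beta:\lambda\in\Lambda]
\]
by $x_\lambda\mapsto x_\lambda$, $y_\lambda\mapsto y_\lambda$, $X_\lambda\mapsto x_\lambda+\alpha y_\lambda$, and $Y_\lambda\mapsto y_\lambda+\beta x_\lambda$. A direct expansion of (\ref{E_I11}) and (\ref{E_I12}) shows that $\Phi$ kills every listed generator, so $I_1\subseteq\ker\Phi$. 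The whole task is then to prove the reverse inclusion $\ker\Phi\subseteq I_1$.

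For this, I would first perform the invertible linear change of generators $U_\lambda:=X_\lambda-x_\lambda$ and $V_\lambda:=y_\lambda-Y_\lambda$, which identifies $\R[\Lambda]$ with the polynomial ring $\R[x_\lambda,y_\lambda,U_\lambda,V_\lambda:\lambda\in\Lambda]$. In the new generators the polynomial (\ref{E_I11}) becomes the $2\times 2$ minor $U_\lambda y_\mu-y_\lambda U_\mu$, the polynomial (\ref{E_I12}) becomes $-(V_\lambda x_\mu-x_\lambda V_\mu)$, and $\Phi$ takes the simple form $U_\lambda\mapsto\alpha y_\lambda$, $V_\lambda\mapsto-\beta x_\lambda$. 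Since the two families of defining relations now involve disjoint sets of variables $\{y_\lambda,U_\lambda\}$ and $\{x_\lambda,V_\lambda\}$, one obtains
\[
\R[\Lambda]/I_1\;\cong\;A\otimes_\R B,
\]
where $A:=\R[y_\lambda,U_\lambda]/(U_\lambda y_\mu-y_\lambda U_\mu)$ and $B:=\R[x_\lambda,V_\lambda]/(V_\lambda x_\mu-x_\lambda V_\mu)$. It then suffices to show that the map $A\to\R[y_\lambda,\alpha]$ given by $U_\lambda\mapsto\alpha y_\lambda$ is injective and that $A$ is $\R$-flat; the corresponding statements for $B$ follow by the obvious symmetry $(y,U)\leftrightarrow(x,V)$, $\alpha\leftrightarrow\beta$, and the tensor product of the two embeddings then assembles into the required injection into $\R[x_\lambda,y_\lambda,\alpha,\beta]$.

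The injectivity claim for $A$ is the heart of the argument. Grading $A$ by total degree in the $U_\lambda$, a short induction on $k$ using the relations $U_\lambda y_\mu=y_\lambda U_\mu$ identifies the degree-$k$ piece $A_k$ with the $k$-th power $(y_\lambda:\lambda\in\Lambda)^k$ of the irrelevant ideal as an $\R[y_\lambda]$-module, via $U^I\mapsto y^I$ on monomials of $U$-degree $k$. Consequently $A$ is isomorphic to the Rees algebra $\bigoplus_{k\geq 0}(y_\lambda)^k$ of the ideal $(y_\lambda:\lambda\in\Lambda)\subset\R[y_\lambda]$, which is realized concretely as the subring $\R[y_\lambda,\alpha y_\lambda:\lambda\in\Lambda]$ of the polynomial ring $\R[y_\lambda,\alpha]$; in particular $A$ is a domain, the map $U_\lambda\mapsto\alpha y_\lambda$ is injective, and each $A_k\cong(y_\lambda)^k$ is a free $\R$-module, so $A$ is in fact $\R$-free and a fortiori $\R$-flat. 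I expect this identification of $A$ with a Rees algebra---equivalently, the classical primality of the ideal of $2\times 2$ minors of a generic $2\times|\Lambda|$ matrix over a Noetherian domain---to be the main technical obstacle; every other step is routine linear bookkeeping once this is in hand.
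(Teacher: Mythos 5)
Your proposal is correct, and it is worth noting that the paper itself gives no proof of Lemma~\ref{L_ip}: it is quoted from \cite{DGH2}, and the closest in-paper material is the theorem attributed to Chmutov at the end of Section~\ref{s3}, where the very substitution you use ($X_\lambda\mapsto x_\lambda+Xy_\lambda$, $Y_\lambda\mapsto y_\lambda+Yx_\lambda$, your $\alpha,\beta$ playing the roles of $X,Y$) appears --- but there it is \emph{deduced from} the primality of $I_1$, since primality is what licenses localizing $\R[\Lambda]/I_1$ at the $x_\lambda,y_\lambda$. You run the logic in the opposite direction: you realize the substitution as an unlocalized embedding $\R[\Lambda]/I_1\hookrightarrow\R[x_\lambda,y_\lambda,\alpha,\beta:\lambda\in\Lambda]$, which proves primality outright and, after localizing (localization being exact, and $\alpha,\beta$ lying in the localized image), recovers the paper's localized isomorphism as a byproduct; so your route is more self-contained than the paper's, at the price of invoking one genuine piece of commutative algebra. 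Your reductions are sound: the change of generators $U_\lambda=X_\lambda-x_\lambda$, $V_\lambda=y_\lambda-Y_\lambda$ does convert (\ref{E_I11}) and (\ref{E_I12}) into the two separated families of $2\times 2$ minors, the resulting tensor decomposition $A\otimes_\R B$ is legitimate, and the $\R$-freeness of the graded pieces is exactly what is needed to tensor the two embeddings. The only loose phrase is the parenthetical claim that the key step is ``equivalently'' the classical primality of the ideal of $2\times 2$ minors of a generic $2\times|\Lambda|$ matrix: what you actually need is the stronger (though equally classical) fact that these minors generate the \emph{entire} kernel of $U_\lambda\mapsto\alpha y_\lambda$, i.e.\ that an ideal generated by a regular sequence of variables is of linear type, so that $A$ really is the Rees algebra; primality of the minor ideal alone only gives an inclusion of two primes, not their equality. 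Your sketched grading/normal-form argument --- every monomial of $U$-degree $k$ is congruent modulo the minors to one determined by its total exponent vector, and the images $\alpha^k y^{E}$ are $\R$-linearly independent --- does establish the full identification, and it needs no Noetherian or finiteness hypothesis since any element of the kernel involves only finitely many $\lambda$, so the case of infinite $\Lambda$ is covered as well.
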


As noted in~\cite[Remark 3]{BR} and in \cite{DGH2}, the above
definitions and statements may be generalized to {\em matroids} without
essential adjustment. In fact, the definitions of activities may be restated by
replacing the word ``spanning tree'' with ``matroid basis'' and
interpreting the word ``cycle'' as a ``minimal dependent set''.
In particular there is a trivial generalization of our Tutte
polynomial to {\em disconnected graphs}, by replacing the word
``spanning tree'' with {\em spanning forest}. Given a disconnected graph $G$
with connected components $G_1,\ldots,G_k$, the Tutte polynomial
$T(G)$ obtained via this generalization is the product of the
Tutte polynomials of its components.
There are also other generalizations to disconnected graphs which
keep track of the number of connected components. Such
generalizations are discussed in~\cite[Section 3.4]{BR}. In order
to be able to represent a larger class of graph-theoretic
polynomials by substitution into the variables of their colored
Tutte polynomial, Bollob\'as and Riordan~\cite{BR} introduced a
multiplicative constant $\alpha_{k(G)}$ (depending only on
the number $k(G)$ of connected components in $G$) to $T(G)$. Consequently
they generalized their result~\cite[Theorem 2]{BR}, providing a
necessary and sufficient condition to have a labeling independent
Tutte polynomial $\alpha_{k(G)}T(G)$. We wish to stress that these
disconnected generalizations depend on the number of connected
components which {\em cannot be recovered from the
cycle matroid of the graph}.

\medskip
Finally, since the Tutte polynomials considered above are labeling
independent, we have the following recursive formula
\begin{equation}\label{recur}
T(G)=\left\{
\begin{array}{ll}
y_\lambda T(G\setminus e)+x_\lambda T(G/e)&\quad e\ {\rm is}\ {\rm
neither}\ {\rm a}\ {\rm loop}\ {\rm nor}\ {\rm a}\ {\rm bridge},\\
Y_\lambda T(G\setminus e)&\quad e\ {\rm is}\ {\rm a}\ {\rm loop},\\
X_\lambda T(G/e)&\quad e\ {\rm is}\ {\rm a}\ {\rm bridge},
\end{array}
\right.
\end{equation}
where $\lambda$ is the color of $e$, $G\setminus e$ is the graph
obtained from $G$ by deleting $e$ and $G/e$ is the graph obtained
from $G$ by contracting $e$. (See \cite[(3.14)]{BR} and the
Preliminaries in~\cite{DGH2}.)

\bigskip
\section{Relative Tutte Polynomials}\label{s3}

In this section we introduce the relative Tutte polynomial of a
connected colored
graph $G$, with respect to a set of edges $\H\subset E(G)$ (where $E(G)$ is the edge set of $G$) and prove our generalization
of Theorem \ref{T_BR} of Bollob\'as and Riordan \cite{BR} to our
relative Tutte polynomials. The definitions and the main result may be
easily generalized to colored matroids, by keeping the relative Tutte
polynomial $\psi(G)$ of a graph satisfying $\H=E(G)$ a matroid
invariant. However, as we will see below, a larger class of mappings
$\psi$ fits our theory. In Section~\ref{s4}  these other
generalizations will allow us to
consider the Tutte polynomials of disconnected graphs introduced by
Bollob\'as and Riordan~\cite[Section 3.4]{BR} as a special instance of our
relative Tutte polynomials, even without extending our definitions to
disconnected graphs. In our presentation we will focus on graphs
and indicate along the way in remarks how the immediate generalization
to matroids  may be made, when applicable. However, each time we omit half of
the proof of a lemma, it is implied that the statement should be about
matroid and what is left to prove is the dual of what we have already
shown, using only the matroid structure.

\begin{definition}\label{TD1}
Let $G$ be a connected graph and $\H$ a subset of its edge set
$E(G)$. A subset $\C$ of the edge set $E(G)\setminus \H$ is called a
{\em contracting set} of $G$ with respect to $\H$ if $\C$ contains no
cycles and $\D:=E(G)\setminus(\C\cup \H)$ contains no cocycles (and $\D$ is called a {\em deleting set}).
\end{definition}
Note that the sets $\C$ and $\D$ mutually determine each other, by their
disjoint union being $E(G)\setminus \H$.

\begin{remark}
{\em Definition~\ref{TD1} may be used without any change to define
a contracting set $\C$ and  a deleting set $\D$ for any matroid with
respect to a set of elements $\H$. For matroids we obtain
a ``self-dual'' dual notion:
$(\C,\D)$ is a pair of contracting and deleting sets for a matroid
${\mathcal M}$ if and only of $(\D,\C)$ is a pair of contracting and
deleting sets for the dual matroid ${\mathcal M}^*$. }
\end{remark}

\begin{remark}\label{R_nod}
{\em Recalling that the cocycles in a
connected graph $G$ are exactly the minimal cuts, $\D$ contains no
cocycles if and only if the deletion of $\D$ does not disconnect the
graph $G$.}
\end{remark}

Sometimes we will refer to $\C$, $\D$ and
$\H$ as {\em graphs}, and in each such instance we consider them as the subgraphs of $G$ induced by the respective set of edges.
Since the spanning trees of $G$ are the cycle-free connected subgraphs of $G$, at the light of Remark~\ref{R_nod} the following lemma is obvious.

\begin{lemma}\label{TL1}
In the above definition, if $\H=\emptyset$, then $\C\subseteq E(G)$ is a
contracting set if and only if the subgraph $\C$ is a
spanning tree of $E(G)$.
\end{lemma}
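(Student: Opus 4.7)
The plan is to specialize Definition~\ref{TD1} to the case $\H = \emptyset$ and then recognize the two resulting conditions on $\C$ as exactly the two defining properties of a spanning tree: being a spanning connected subgraph and being acyclic.

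First I would substitute $\H = \emptyset$ into the definition. This forces $\D = E(G) \setminus \C$, so $\C$ is a contracting set if and only if the following two conditions hold simultaneously: (i) $\C$ contains no cycles, and (ii) $E(G) \setminus \C$ contains no cocycles. Condition (i) says exactly that the subgraph $\C$ is a forest.

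Next I would invoke Remark~\ref{R_nod} to reinterpret condition (ii). Since the cocycles of the connected graph $G$ are the minimal cuts, $E(G)\setminus \C$ contains no cocycle if and only if the removal of the edges in $E(G)\setminus \C$ from $G$ does not disconnect $G$. Equivalently, the spanning subgraph $\C$ is connected (it contains every vertex of $G$ since we are removing only edges).

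Finally I would combine the two equivalences: $\C$ is a contracting set iff $\C$ is a connected spanning acyclic subgraph of $G$, which is one of the standard equivalent characterizations of a spanning tree. The converse direction follows from the same chain of equivalences read in reverse. There is no real obstacle here beyond unwinding the definitions and citing Remark~\ref{R_nod}; the statement is essentially a sanity-check that Definition~\ref{TD1} reduces to the usual notion in the trivial case $\H = \emptyset$.
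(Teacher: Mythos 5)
Your proof is correct and follows exactly the route the paper intends: the paper declares the lemma ``obvious'' in light of Remark~\ref{R_nod}, which is precisely the specialization and reinterpretation of the no-cocycle condition that you spell out. Nothing is missing; you have simply made explicit the unwinding of Definition~\ref{TD1} that the authors leave to the reader.
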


To prove the generalizations of our statements for matroids, the following
observation will be useful.
\begin{lemma}
\label{L_b}
$\C$ is a contracting set with respect to $\H$ if and only if there is a
basis $B\subset \C\cup\H$ that contains $\C$.
\end{lemma}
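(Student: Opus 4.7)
The plan is to prove both directions by translating the hypotheses into standard matroid language, noting that in a connected graph ``no cycle'' means ``independent'' and (by Remark~\ref{R_nod}) ``no cocycle in $\D$'' means ``$E(G)\setminus\D=\C\cup\H$ spans $G$'', i.e., contains a spanning tree. In matroid language, $\C$ is a contracting set with respect to $\H$ iff $\C$ is independent and $\C\cup\H$ is spanning.

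For the forward direction, I would assume $\C$ is a contracting set. Then $\C$ is independent and $\C\cup\H$ contains a basis $B_0$ of the matroid. Apply the augmentation axiom for matroids: since $\C$ is independent and $B_0$ is a basis contained in $\C\cup\H$ with $|B_0|\ge|\C|$, one can repeatedly enlarge $\C$ by elements of $B_0\setminus \C$ (hence of $\C\cup\H$) without creating a cycle, until a basis $B$ is obtained. By construction $\C\subseteq B\subseteq \C\cup \H$. In graph-theoretic terms, one is just extending the forest $\C$ to a spanning tree using edges of the connected spanning subgraph $\C\cup\H$, which is possible because any forest in a connected graph can be completed to a spanning tree using edges of any spanning connected subgraph that contains it.

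For the backward direction, suppose $B$ is a basis with $\C\subseteq B\subseteq \C\cup\H$. Since $\C$ is a subset of the independent set $B$, it is independent, i.e., $\C$ contains no cycle. Since $B\subseteq \C\cup\H$ and $B$ is spanning, $\C\cup\H$ is spanning, so its complement $\D$ is coindependent; equivalently, $\D$ contains no cocycle.

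The argument is essentially a one-line invocation of the exchange/augmentation property, so there is no real obstacle. The only thing worth being careful about is stating it so that the proof works for matroids as well as for connected graphs, which is why I would phrase ``contracting set'' from the outset in its matroid-equivalent form ``$\C$ is independent and $\C\cup\H$ is spanning'' and then appeal to the augmentation axiom, rather than arguing directly with spanning trees and cuts.
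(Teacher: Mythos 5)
Your proof is correct and follows essentially the same route as the paper's: both directions translate ``no cycle/no cocycle'' into ``$\C$ independent'' and ``$\D$ co-independent, hence $\C\cup\H$ of full rank,'' and the forward direction then extends $\C$ to a basis inside $\C\cup\H$ (your use of the augmentation axiom is just the paper's ``extend $\C$ to a maximal independent subset of $\C\cup\H$''). No gaps.
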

\begin{proof}
Assume first $\C$ is a contracting set. Then $\D:=E(G)\setminus (\C\cup \H)$
contains no cocycle, so it is co-independent, contained in a dual
basis. The complement of this dual basis is a basis contained in
$\C\cup\H$. Thus $\C\cup \H$ has full rank. Since $\C$ contains no
cycle, it is independent. Extending the independent set $\C$ to a
maximal independent subset of $\C\cup\H$ yields a basis
containing $\C$ and contained in $\C\cup\H$.

Assume now that there is a basis $B\subset \C\cup\H$ that contains $\C$. Then $\C\subset B$ is independent and contains no cycle. On
the other hand, $E(G)\setminus B$ is a dual basis containing $\D$. Hence
$\D$ is co-independent and contains no cocycle.
\end{proof}

Note that even if $E(G)\setminus \H$ contains edges that are not
loops, it is possible that $\C=\emptyset$ (as the following example
shows). This is not the case when $\H=\emptyset$.
On the other hand, it is important to note that for any edge $e\in
\D$, the graph $\{e\}\cup {\C\cup \H}$ must contain a cycle with
$e$ in it (although such a cycle may not be unique), for otherwise
$e$ would be a cut edge of $\{e\}\cup {\C\cup \H}$, contradicting
the fact that deleting $\D$ cannot disconnect the graph.

\medskip
\begin{example}
In the following figure, $\H=\{e_2\}$. So by definition, $\{e_1\}$
can serve either as $\C$ or $\D$.
\begin{figure}[!htb]
\begin{center}
\includegraphics[scale=.8]{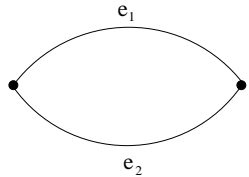}
\caption{\label{RTuttefig1} A graph $G$ of two edges with $\H$
containing one edge.}
\end{center}
\end{figure}
\end{example}

\begin{lemma}\label{TL3}
Let $G$ be a connected graph and $\H$ a subset of $E(G)$. Let $\C$
be a contracting set of $G$ with respect to $\H$, $\D$ be the
corresponding deleting set and $e\in \C$ be any edge in $\C$. Then
for any $f\in \D$, $\C^\p=\{f\}\cup (\C\setminus \{e\})$ is also a
contracting set with respect to $\H$ if the triplet $(\C,e,f)$
has either of the following properties:
\begin{itemize}
\item[(i)] $\C\cup \{f\}$ contains a cycle containing $\{e\}$.
\item[(ii)] $\D\cup \{e\}$ contains a cocycle containing
  $\{f\}$.
\end{itemize}
Moreover, if the triplet $(\C,e,f)$ satisfies (i) or (ii) then the
triplet $(\C^\p,f,e)$ has the same properties.
\end{lemma}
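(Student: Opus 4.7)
The plan is to reduce the claim to a single application of matroid basis exchange via Lemma~\ref{L_b}. I would start by picking a basis $B$ with $\C \subseteq B \subseteq \C \cup \H$ and then produce a basis $B'$ satisfying $\C' \subseteq B' \subseteq \C' \cup \H$; Lemma~\ref{L_b} will then certify that $\C'$ is a contracting set. Before splitting into cases, I would record two set identities that carry all of the bookkeeping, namely
\begin{equation*}
\C' \cup \{e\} \;=\; \C \cup \{f\}
\qquad\text{and}\qquad
\D' \cup \{f\} \;=\; \D \cup \{e\},
\end{equation*}
where $\D' := E(G) \setminus (\C' \cup \H)$. Both are immediate from $e \in \C$, $f \in \D$, and $e,f \notin \H$.

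In case (i), I would argue that since $\C$ contains no cycle, any cycle lying in $\C \cup \{f\}$ must use $f$, so the hypothesized cycle $Z$ contains both $e$ and $f$. Because $f \notin B$, the set $B \cup \{f\}$ contains a unique fundamental circuit $C_f$, and every circuit of $B \cup \{f\}$ equals $C_f$; thus $Z = C_f$ and $e \in C_f$. Standard basis exchange then yields that $B' := (B \setminus \{e\}) \cup \{f\}$ is a basis, and the inclusions $\C' \subseteq B' \subseteq \C' \cup \H$ follow at once (using $e \notin \H$). Case (ii) is handled by passing to the dual matroid ${\mathcal M}^*$: the pair $(\D, \C)$ is a contracting/deleting pair for ${\mathcal M}^*$, and a cocycle of $G$ through $f$ inside $\D \cup \{e\}$ is precisely a circuit of ${\mathcal M}^*$ through $f$ inside the dual analogue of $\C \cup \{f\}$. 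Applying (i) to ${\mathcal M}^*$ produces the same $B'$ (as the complement of the resulting dual basis).

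For the ``moreover'' clause, the two set identities finish the job with essentially no further work. In case (i), $Z \subseteq \C \cup \{f\} = \C' \cup \{e\}$ and $f \in Z$, so $Z$ itself witnesses property (i) for the triple $(\C', f, e)$. In case (ii), the hypothesized cocycle lies in $\D \cup \{e\} = \D' \cup \{f\}$ and must contain $e$ because $\D$ is cocycle-free by hypothesis, which is property (ii) for $(\C', f, e)$.

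The only step I expect to require genuine attention is the observation that the cycle (resp.\ cocycle) furnished by (i) (resp.\ (ii)) necessarily uses the second swap endpoint; this follows from $\C$ being cycle-free and $\D$ being cocycle-free. Everything else is basis exchange together with routine set-theoretic manipulation on the disjoint decomposition $E(G) = \C \sqcup \D \sqcup \H$, so I do not anticipate a serious obstacle beyond setting up the dual version of (i) cleanly.
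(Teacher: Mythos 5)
Your proposal is correct and follows essentially the same route as the paper's own proof: invoke Lemma~\ref{L_b} to get a basis $B$ with $\C\subseteq B\subseteq\C\cup\H$, identify the hypothesized cycle with the unique circuit of $B\cup\{f\}$ so that the exchanged set $B'=(B\setminus\{e\})\cup\{f\}$ is again a basis certifying that $\C^\p$ is a contracting set, and settle case (ii) by matroid duality. The only (harmless) cosmetic difference is that you verify the ``moreover'' clause for case (ii) directly from the set identities $\C^\p\cup\{e\}=\C\cup\{f\}$ and $\D^\p\cup\{f\}=\D\cup\{e\}$, where the paper simply appeals to dualizing the case (i) argument.
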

\begin{proof}
Assume first that (i) holds. Since $\C$ contains no cycle, adding $f$
creates at most one cycle $C_0$. By our assumption, $e\in C_0$. By
Lemma~\ref{L_b} there is a basis $B$ containing $\C$, contained in
$\C\cup \H$. Adding $f$ to $B$ creates only one cycle, so this cycle is
still $C_0$. Thus $B^\p:=B\cup \{f\}\setminus \{e\}$ contains no cycle,
and it is a basis. Clearly $B^\p$ is a basis containing $\C^\p$ and
contained in $\C^\p\cup\H$. Thus $\C^\p$ is a contracting set by
Lemma~\ref{L_b}. Furthermore $\C^\p\cup\{e\}$ contains the cycle $C_0$
which contains $f$.

The statements about property (ii) follow by ``dualizing'' the above
argument.
\end{proof}
\begin{remark}
{\em The statement and proof of Lemma~\ref{TL3} may be generalized to
  matroids immediately. For graphs, property (ii) is equivalent to the
  following:

(ii') $e$ is a bridge in $\C\cup \H$ and $(\C\cup \H\cup \{f\})\setminus \{e\}$ is connected.
}
\end{remark}
It should be noted that properties (i) and (ii) are not necessarily mutually
exclusive for a triplet $(\C,e,f)$. As seen in the proof of
Lemma~\ref{TL3},  if a triplet $(\C,e,f)$
has property (i) then the cycle $C_0$
contained in $\C\cup \{f\}$ is unique, and
equal to the unique cycle contained in $\C^\p \cup \{e\}$. If a triplet
$(\C,e,f)$ has property (ii) but does not have property (i) then every
cycle contained in $\C\cup \H\cup\{f\}$ and containing $\{e,f\}$ contains at
least one element of $\H$, and there is at least one such cycle.

In the next definition, we define the relative activities of  edges
of $E(G)\setminus \H$ with respect to a contracting set $\C$ of $G$
based on a particular labeling of the edges  of $G$.

\begin{definition}{\em
Let $G$ be a connected graph and $\H$ be a subset of $E(G)$. Let us
assume that a labeling  of $G$ is given in such a way that all
edges in $\H$ are labeled with number $0$ and all other edges are
labeled with distinct positive integers. Such a labeling is called
a {\em proper labeling} or a {\em relative labeling} (with respect
to $\H$). In other words, a proper labeling of the edges of $G$ with
respect to $\H$ is a map $\phi:\ E(G)\longrightarrow
\mathbb{Z}$ such that $\phi(e)=0$ for any $e\in \H$ and $\phi$
is an injective map from $E(G)\setminus \H$ to $\mathbb{Z}^+$.  We
say that $e_1$ is larger than $e_2$ if $\phi(e_1)>\phi(e_2)$. Let $\C$ be a contracting set of $G$
with respect to $\H$, then

a) an edge $e\in \C$ is called {\em internally active} if $\D\cup \{e\}$
contains a cocycle $D_0$ in which $e$ is the smallest edge,
otherwise it is {\em internally inactive}.

b) an edge $f\in \D$ is called {\em externally active} if $\C\cup \{f\}$
contains a cycle $C_0$ in which $f$ is the smallest edge,
otherwise it is {\em externally inactive}.
}\end{definition}

\begin{remark}\label{r-active}{\em
For any $f\in\D$, if $\C\cup\{f\}$ contains a cycle then this cycle is unique. For a fixed $\C$ we may identify the internally or
externally active edges by comparing the label of each $e\in \C$ with
the label of each $f\in \D$ such that the triplet $(\C,e,f)$ has at
least one of the properties considered in Lemma \ref{TL3}.}
\end{remark}

\begin{remark}\label{r-active-d}{\em
An internal edge $e\in\C$ is active only if it closes a cocycle
in $\D$. For a connected graph, $\D\cup\{e\}$ contains a cocycle if and
only if removing $\D\cup\{e\}$ disconnects the graph. This is equivalent
  to stating that $e$ is not contained in any cycle of
$\C\cup \H$ or, that $e$ is
a bridge in $\C\cup \H$. This rephrasing remains true for matroids in
general if we define ``bridge'' as ``coloop''. (The proof is left to the
reader.)  A bridge $e$ in $\C\cup \H$ is active exactly
when it is smaller than
any $f\in\D$ connecting the two components of $\C\cup\H\setminus
\{e\}$.  Hence we may restate the condition for internal activity as
follows: an edge $e\in\C$ is internally active
if whenever
an edge $f\not\in \C$ closes a cycle in $\{f\}\cup \C\cup \H$
containing $e$, $e$ is always smaller than
$f$; otherwise $e$ is said to be {\em internally inactive}. This last
rephrasing holds again for matroids in general.  In
particular, if $e$ is on a cycle containing only edges from $\C\cup
\H$, then $e$ is internally inactive.
}
\end{remark}

\begin{remark}\label{equi-act-def}
{\em
The following statements offer an equivalent definition of the
activity of a regular edge of $G$. An edge $e\in \C$ is internally
active if and only if it becomes a bridge once all edges in $\D$
larger than $e$ are deleted. On the other hand,
an edge $f\in \D$ is externally active if and only if it becomes a
loop after all edges in $\C$ larger than $f$ are
contracted.}
\end{remark}

Before we give a formal definition of our relative Tutte polynomial
of a colored graph $G$ with respect to a subset $\H$ of edges of
$G$, let us explain the role we intend the set $\H$ to play. In the
classical case where $\H=\emptyset$, when we apply the recursive
formula \ref{recur} to compute $T(G)$, every edge of $G$ is either
deleted or contracted in the process. At the end, only one vertex is
left. Naturally, the Tutte polynomial of a vertex is defined to be
$1$. In our case, we would like to be able to compute our relative
Tutte polynomial by the same recursive rule (\ref{recur}), however we
want to preserve the special zero edges (edges in $\H$) in this
process, as these can be special edges that may not allow the use of a
contraction/deletion formula similar to \ref{recur} (which is the case
in our application to virtual knot theory). By doing so, at the end of
the process, we will end up with graphs $\H_{\C}$
obtained from $G$ by contracting the edges in $\C$ and deleting the
edges in $\D$. We have the option of defining a weight $\psi(\H_{\C})$ for these graphs in a different manner. In the proof of our main result we will only need to be
able to guarantee that for any triplet $(\C,e,f)$ satisfying property
(i) or (ii) in Lemma~\ref{TL3}, we associate the same value to the
graphs $\H_\C$ and $\H_{\C^\p}$. To guarantee this, we require the
mapping $\psi$ to have the following property.
\begin{definition}
\label{Dpsi}
Let $\psi$ be a mapping defined on the
isomorphism classes of finite connected graphs with values in a ring
$\R$. We say that $\psi$ is a {\em block invariant}
if for all positive integer $n$ there is a function $f_n: \R^n\rightarrow
\R$ that is symmetric under permuting its input variables such that for
any connected graph $G$ having $n$ blocks $G_1$, \ldots, $G_n$ we have
$$
\psi(G)=f_n(\psi(G_1),\ldots,\psi(G_n)).
$$
\end{definition}
In other words, we require the ability to compute $\psi(G)$ from
the value of $\psi$ on the blocks of $G$, and
this computation should not depend on the order in which the blocks
are listed.
\begin{lemma}
\label{HC}
Let $G$ be a connected graph and $\H$ be a subset of $E(G)$.
Assume that $\C$ is a contracting set with respect to $\H$ and
that the triplet $(\C,e,f)$ has at least one of the properties listed
in Lemma~\ref{TL3}. Let $\C^\p:=(\C\cup\{f\})\setminus \{e\}$.
Then the multiset of blocks of $\H_{\C}$ is the same as the multiset of
blocks of $\H_{\C^\p}$.
\end{lemma}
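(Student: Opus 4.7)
The plan is to handle the two cases (i) and (ii) of Lemma~\ref{TL3} separately, as they are structurally different. In case (i), where $\C\cup\{f\}$ contains a cycle $C_0$ through $e$, I would prove the stronger statement that $\H_\C$ and $\H_{\C'}$ are equal as labeled graphs. Both have edge set $\H$, so it is enough to check that their vertex identifications coincide. These identifications are recorded by the equivalence relations $\sim_\C$ and $\sim_{\C'}$ on $V(G)$ describing connectivity in the subgraphs $(V(G),\C)$ and $(V(G),\C')$ respectively. Since $C_0\setminus\{e\}\subseteq\C'$ is a walk joining the endpoints of $e$ within $\C'$, and $C_0\setminus\{f\}\subseteq\C$ is a walk joining the endpoints of $f$ within $\C$, any walk in $\C$ may be rerouted into one in $\C'$, and conversely. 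Hence $\sim_\C=\sim_{\C'}$, so $\H_\C=\H_{\C'}$ and the block multisets trivially coincide.

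In case (ii), where $\D\cup\{e\}$ contains a cocycle $D_0$ through $f$, I would first show $e\in D_0$; otherwise $D_0\subseteq \D$ would be a cocycle contained in $\D$, contradicting Definition~\ref{TD1}. Being a minimal cut, $D_0$ partitions $V(G)$ into two sets $V_1,V_2$ and consists of all edges running between them. It follows that no $\H$-edge crosses the cut, that $\C\cap D_0=\{e\}$ and $\C'\cap D_0=\{f\}$, and that $\C$ and $\C'$ agree on edges internal to each $V_i$. Writing $\sim_i$ for the equivalence on $V_i$ defined by connectivity in the subgraph $(V_i,\C\cap V_i^*)$ (where $V_i^*$ denotes edges with both endpoints in $V_i$), and $G_i'$ for the graph on $V_i/\sim_i$ with edge set $\H\cap V_i^*$, one checks that $\H_\C$ is obtained from $G_1'\sqcup G_2'$ by identifying $[a]_1$ with $[c]_2$, where $e=ac$ with $a\in V_1,\,c\in V_2$; likewise $\H_{\C'}$ is obtained by identifying $[b]_1$ with $[d]_2$ for $f=bd$. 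Crucially, the pieces $G_1',G_2'$ themselves do not depend on whether we work with $\C$ or $\C'$.

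Finally, since $\H_\C$ is a quotient of $G\setminus\D$, which is connected by Remark~\ref{R_nod}, both $\H_\C$ and $\H_{\C'}$ are connected, and so $G_1'$ and $G_2'$ must be connected as well. Invoking the elementary fact that the blocks of a one-point gluing of two connected graphs form the disjoint multiset union of the blocks of the two pieces, independently of the gluing vertex, immediately yields equality of the block multisets of $\H_\C$ and $\H_{\C'}$. The main obstacle is case (ii): unlike case (i) the two graphs need not even be isomorphic, so the proof has to be carried out at the block level rather than as a direct comparison of graphs, and several small structural observations—that $e\in D_0$, that the cut is clean with respect to $\H$ and $\C\setminus\{e\}$, and that blocks are insensitive to the gluing vertex—must be assembled carefully.
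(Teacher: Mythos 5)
Your proof is correct and follows essentially the same route as the paper: in case (i) you show the two graphs coincide (the paper notes they are isomorphic), and in case (ii) you recognize that $\H_{\C}$ and $\H_{\C^\p}$ are the same two connected pieces glued at possibly different vertices across the bond $D_0$, which is exactly the paper's ``expand the bridge $e$, remove it, add the bridge $f$, contract it'' argument, so the block multisets agree. You merely supply in detail the structural facts (that $e\in D_0$, that no edge of $\H$ or of $\C\setminus\{e\}$ crosses the cut, and that blocks are insensitive to the gluing vertex) which the paper treats as clear.
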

\begin{proof}
The statement is obvious when $(\C,e,f)$ has property (i), since
then $\H_\C$ and $\H_{\C^\p}$ are isomorphic as graphs.
If $(\C,e,f)$ has property (ii), then $\H_{\C^\p}$ is obtained from
  $\H_\C$ by expanding the edge $e$ (which becomes a bridge), removing
$e$, adding a new bridge $f$, and contracting $f$. Clearly $\H_\C$ and
  $\H_{\C^\p}$  have the same blocks.
\end{proof}
Recall that the cycle matroid of a graph depends only on its $2$-edge
connected components which are subsets of its blocks. Thus a matroid
invariant of connected graphs is also a block invariant.
When we generalize our notion of the relative Tutte polynomial to
matroids we want to require $\psi$ to be a matroid invariant with values
in a fixed integral domain $\R$. Then we need the following variant of
Lemma~\ref{HC}:

\begin{lemma}
\label{HCm}
Let $M$ be a matroid and $\H$ a subset of its elements.
Assume that $\C$ is a contracting set with respect to $\H$ and
that the triplet $(\C,e,f)$ has at least one of the properties listed
in Lemma~\ref{TL3}. Let $\C^\p:=(\C\cup\{f\})\setminus \{e\}$.
Then the cycle matroid of the graph $\H_{\C}$ is the same as the cycle
matroid of the graph $\H_{\C^\p}$.
\end{lemma}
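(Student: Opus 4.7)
The plan is to prove directly that the matroids $(M/\C)\setminus \D$ and $(M/\C^\p)\setminus \D^\p$ coincide on their common ground set $\H$, where $\D^\p := (\D\cup \{e\})\setminus \{f\}$; this is the natural matroid reading of the lemma's conclusion. Because $\C$ is independent in $M$ by Lemma~\ref{L_b}, a subset $S\subseteq \H$ is independent in $(M/\C)\setminus \D$ precisely when $S\cup \C$ is independent in $M$, with the analogous statement for $\C^\p$. Thus I would reduce the claim to showing, for every $S\subseteq \H$, that $S\cup \C$ is independent in $M$ if and only if $S\cup \C^\p$ is independent in $M$.

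First I would treat property (i), under which there is a circuit $C_0\subseteq \C\cup \{f\}$ with $e\in C_0$. If $S\cup \C$ is independent but $S\cup \C^\p$ contains a circuit $C_1$, then $f\in C_1$ because $S\cup(\C\setminus\{e\})$ is an independent subset of $S\cup \C$. The circuit elimination axiom applied to $C_0$ and $C_1$ at $f$ then produces a circuit $C_2\subseteq (C_0\cup C_1)\setminus\{f\}\subseteq S\cup \C$, a contradiction. The reverse implication is symmetric: a hypothetical circuit in $S\cup \C$ must contain $e$, and circuit elimination at $e\in C_0$ yields a circuit inside $S\cup \C^\p$.

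For property (ii) I would invoke matroid duality. By the remark following Definition~\ref{TD1}, the pair $(\D,\C)$ is a contracting/deleting pair in the dual matroid $M^*$; since cocycles of $M$ are circuits of $M^*$, property (ii) for the triplet $(\C,e,f)$ in $M$ is precisely property (i) for the triplet $(\D,f,e)$ in $M^*$. Applying the already-established case (i) to $M^*$ yields $(M^*/\D)\setminus \C = (M^*/\D^\p)\setminus \C^\p$, and the standard matroid identity $((N/A)\setminus B)^*=(N^*/B)\setminus A$ together with $M^{**}=M$ delivers the desired equality upon taking duals.

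The main difficulty I expect is the circuit-elimination step in case (i): one has to track carefully which elements of $C_0\cup C_1$ lie in $\C$, $\D$, or $\H$ to confirm that the produced circuit $C_2$ actually sits inside the putatively independent set. Once case (i) is in hand, case (ii) becomes a routine duality calculation.
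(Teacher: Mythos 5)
Your proof is correct, and its overall skeleton matches the paper's: both reduce the claim to comparing, for each $S\subseteq\H$, the independence of $S\cup\C$ and of $S\cup\C^\p$ in $M$ (using that $\C$ and $\C^\p$ are independent), both prove the property-(i) case, and both obtain the property-(ii) case by matroid duality --- your explicit use of the fact that $(\D,f,e)$ has property (i) in $M^*$ together with the identity $((N/A)\setminus B)^*=(N^*/B)\setminus A$ is precisely what the paper means by ``dualizing the argument.'' Where you genuinely differ is the engine for case (i): the paper recycles Lemma~\ref{L_b} and the basis swap established in the proof of Lemma~\ref{TL3} (take a basis $B\supseteq X\cup\C$; the unique circuit of $B\cup\{f\}$ is $C_0$, which contains $e$, so $B^\p=(B\cup\{f\})\setminus\{e\}$ is a basis containing $X\cup\C^\p$; the converse follows since $(\C^\p,f,e)$ again has property (i)), whereas you argue from scratch via the circuit elimination axiom. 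Both are sound; the paper's route is shorter because it reuses work already done, while yours is self-contained and uses only the circuit axioms. Two small points to tidy in your write-up: circuit elimination requires the two circuits to be distinct, which you should note explicitly (it holds since $e\in C_0$ but $e\notin C_1\subseteq S\cup\C^\p$, and in the reverse direction $f\in C_0$ while $f$ cannot lie in a circuit contained in $S\cup\C$); and the ``careful tracking'' you were worried about is immediate, since $C_0\cup C_1\subseteq S\cup\C\cup\{f\}$ yields $C_2\subseteq S\cup\C$ at once (and similarly in the other direction).
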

\begin{proof}
Assume that $(\C,e,f)$ has property (i).
A set $X\subseteq \H$ is independent in $\H_{\C}$ if and only if there
is a basis $B$ of the original graph containing $X\cup\C$. As seen in
the proof of Lemma~\ref{TL3}, the set $B^\p:=(B\cup\{f\})\setminus
\{e\}$ is also a basis of the original graph, and this basis contains
$X\cup \C^\p$. Thus if $X\subseteq \H$ is independent in $\H_{\C}$, it
is also independent in $\H_{\C^\p}$. The converse is also true since, by
Lemma ~\ref{TL3}, the triplet $(\C^\p,f,e)$ also has property (i).

The proof for the case when $(\C,e,f)$ has property (ii) may be obtained
by ``dualizing'' the above argument.
\end{proof}

Let $G$ be a connected graph and $\H\subseteq E(G)$. Assume
we are given a mapping $c$ from $E(G)\setminus \H$ to a color set
$\Lambda$.  Assume further that $\psi$ is a block invariant
associating an element of a fixed integral domain $\R$ to each connected
graph. For any contracting set  $\C$ of $G$ with
respect to $\H$, let $\H_\C$ be the graph obtained by deleting all
edges in $\D$ and contracting all edges in $\C$ (so that the only
edges left in $\H_\C$ are the zero edges). Finally, we will assign a
proper labeling to the edges of $G$. We now define the relative
Tutte polynomial of $G$ with respect to $\H$ and $\psi$ as
\begin{equation}\label{eq1}
T_\H^\psi(G)=\sum_{\C} \big(\prod_{e\in G\setminus
H}w(G,c,\phi,\C,e)\big)\psi(\H_\C)\in \R[\Lambda],
\end{equation}
where the summation is taken over all contracting sets $\C$
and $w(G,c,\phi,\C,e)$  is the {\em weight} of the edge $e$ with
respect to the contracting set $\C$, which is defined as
(assume that $e$ has color $\lambda$):

\begin{equation}
w(G,c,\phi,\C,e)=\left\{
\begin{array}{ll}
X_\lambda &\ {\rm if}\ e\ {\rm is\ internally\ active};\\
Y_\lambda &\ {\rm if}\ e\ {\rm is\ externally\ active};\\
x_\lambda &\ {\rm if}\ e\ {\rm is\ internally\ inactive};\\
y_\lambda &\ {\rm if}\ e\ {\rm is\ externally\ inactive}.
\end{array}
\right.
\end{equation}

To simplify the notation somewhat, we will be using $T_\H(G)$ for
$T_\H^\psi(G)$, with the understanding that some $\psi$ has been
chosen, unless there is a need to stress what $\psi$ really is.
Following \cite{BR}, we then write
$$
W(G,c,\phi,\C)=\prod_{e\in G\setminus \H}w(G,c,\phi,\C,e)
$$
so that
\begin{equation}\label{eq2}
T_\H(G,\phi)=\sum_{\C} W(G,c,\phi,\C)\psi(\H_\C).
\end{equation}

We are now able to extend Theorem \ref{T_BR} of Bollob\'as and Riordan
\cite{BR} to $T_\H$.

\begin{theorem}\label{DH1}
Assume $I$ is an ideal of $\R[\Lambda]$. Then the
homomorphic image of $T_\H(G,\phi)$ in $\R[\Lambda]/I$ is
independent of $\phi$ (for any $G$ and $\psi$) if and only if
\begin{equation}\label{Ieq1}
\det\left(\begin{array}{ll}
X_\lambda & y_\lambda\\
X_\mu & y_\mu
\end{array}\right)
- \det\left(\begin{array}{ll}
x_\lambda & Y_\lambda\\
x_\mu & Y_\mu
\end{array}\right)\in I
\end{equation}
and
\begin{equation}\label{Ieq2}
\det\left(\begin{array}{ll}
x_\lambda & Y_\lambda\\
x_\mu & Y_\mu
\end{array}\right)
- \det\left(\begin{array}{ll}
x_\lambda & y_\lambda\\
x_\mu & y_\mu
\end{array}\right)\in I.
\end{equation}
hold for all $\lambda,\mu \in \Lambda$.
\end{theorem}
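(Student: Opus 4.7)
The plan is to mirror the strategy Bollob\'as and Riordan use for Proposition~\ref{T_BR}, with the crucial extra ingredient that the factor $\psi(\H_\C)$ must stay constant along the pairings that arise. For sufficiency, I would first reduce to the case of swapping the labels of two edges $e,f\in E(G)\setminus \H$ whose labels are consecutive, since adjacent transpositions generate every permutation of the positive labels; it then suffices to show $T_\H(G,\phi)\equiv T_\H(G,\phi^\p)\pmod{I}$ when $\phi^\p$ swaps $\phi(e)$ and $\phi(f)$. Grouping the contracting sets $\C$ according to where $e$ and $f$ lie, if $\{e,f\}\subseteq \C$ then every cocycle testing $e$'s activity lies in $\D\cup\{e\}$ and cannot contain $f$, so $e$'s activity is insensitive to $\phi(f)$ (and symmetrically for $f$); the dual argument applies when $\{e,f\}\subseteq \D$. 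These two groups therefore contribute zero to $T_\H(G,\phi)-T_\H(G,\phi^\p)$, and one is left with the mixed group: $e\in \C$ and $f\in \D$, or vice versa.

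By Remark~\ref{r-active}, the activity of $e$ or $f$ in the mixed case can change under the swap only when the triplet $(\C,e,f)$ has property (i) or (ii) of Lemma~\ref{TL3}. When it does, I pair $\C$ with $\C^\p:=(\C\cup\{f\})\setminus\{e\}$; by the second half of Lemma~\ref{TL3} this is an involution on the relevant contracting sets, and by Lemma~\ref{HC} we have $\psi(\H_\C)=\psi(\H_{\C^\p})$. This common $\psi$-value, together with the product of weights on the edges other than $e,f$ (which are fixed by the argument of the first paragraph), factors out of the paired sum. What remains is a $2\times 2$ determinantal residue in the eight variables $X_\lambda,Y_\lambda,x_\lambda,y_\lambda,X_\mu,Y_\mu,x_\mu,y_\mu$, and a case analysis on which of (i) and (ii) hold, combined with the observation that under $\phi$ the smaller of $\phi(e),\phi(f)$ is $\phi(e)$ while under $\phi^\p$ it is $\phi(f)$, shows that this residue is exactly either expression (\ref{Ieq1}) or expression (\ref{Ieq2}). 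Both belong to $I$ by hypothesis, completing the sufficiency direction.

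For necessity I would exhibit specific instances $(G,\H,c,\psi)$ whose labeling-independence forces (\ref{Ieq1}) and (\ref{Ieq2}) into $I$. Taking $\psi\equiv 1$ (trivially a block invariant), first let $G$ consist of two parallel edges $e,f$ of colors $\lambda,\mu$ with $\H=\emptyset$. A direct computation of the two contracting sets $\C=\{e\}$ and $\C=\{f\}$ under each of the two labelings gives $T_\H(G,\phi)-T_\H(G,\phi^\p)=\det\!\bigl(\begin{smallmatrix}X_\lambda&y_\lambda\\X_\mu&y_\mu\end{smallmatrix}\bigr)-\det\!\bigl(\begin{smallmatrix}x_\lambda&Y_\lambda\\x_\mu&Y_\mu\end{smallmatrix}\bigr)$, which is precisely (\ref{Ieq1}); this polynomial therefore lies in $I$. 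Second, let $G$ be a triangle with two colored edges $e,f$ of colors $\lambda,\mu$ and the third edge placed in $\H$. Enumerating the three contracting sets and comparing the two labelings gives the difference $\det\!\bigl(\begin{smallmatrix}X_\lambda&y_\lambda\\X_\mu&y_\mu\end{smallmatrix}\bigr)-\det\!\bigl(\begin{smallmatrix}x_\lambda&y_\lambda\\x_\mu&y_\mu\end{smallmatrix}\bigr)$, which is the sum of (\ref{Ieq1}) and (\ref{Ieq2}); combined with the previous example this forces (\ref{Ieq2}) into $I$ as well.

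The main obstacle is the weight-bookkeeping in the paired sum for the sufficiency direction. Concretely, when $(\C,e,f)$ has only property (i) but not (ii) the relevant cycle sits in $\C\cup\{f\}$ and involves only the non-$\H$ labels directly, whereas when only (ii) holds (or both do with a genuinely $\H$-laden cycle) the cycle witnessing the activity swap of $f$ must pass through at least one $\H$-edge, and one must verify that the smallest non-$\H$ label in that cycle is still $e$ before the swap and $f$ after. Confirming that, across all such subcases, the paired residue collapses cleanly to one of the two determinantal generators, without any leftover $X_\nu$ or $Y_\nu$ multipliers, is the delicate heart of the argument; it also explains algebraically why the relative version of the theorem requires only the two simpler relations, matching the ideal $I_1$ of Bollob\'as--Riordan rather than the larger ideal $I_0$.
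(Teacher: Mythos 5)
Your proposal follows essentially the same route as the paper's proof: reduction to adjacent transpositions, discarding the contracting sets with $e,f$ both in $\C$ or both in $\D$, pairing $\C$ with $\C^\p=(\C\cup\{f\})\setminus\{e\}$ via Lemma~\ref{TL3} and invoking Lemma~\ref{HC} so that $\psi(\H_\C)=\psi(\H_{\C^\p})$ factors out, then proving necessity with the doubled-edge graph and the triangle having one zero edge, exactly as the paper does. The only minor inaccuracy is that the paired residue need not be literally (\ref{Ieq1}) or (\ref{Ieq2}): depending on which of properties (i), (ii) hold and on the undetermined activities (which the paper shows are simultaneously active or inactive in the paired terms), it can vanish, equal (\ref{Ieq1}), equal (\ref{Ieq2}) up to sign, or equal their sum, but all of these lie in $I$, so your argument goes through as in the paper.
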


\begin{proof}
We will follow the approach and notation of \cite{BR} as closely as
possible, so the reader  familiar with that paper can follow
easily. In this approach, for the sufficient condition, it suffices
to show that the relative Tutte polynomial defined in equation
(\ref{eq1}) is the same for any two proper labelings of $G$ that
differ only by a transposition under conditions (\ref{Ieq1}) and
(\ref{Ieq2}). That is, if  two proper labelings $\phi$ and $\phi^\p$
differ only on two regular edges $e$ and $f$ such that
$\phi(f)-\phi(e)= 1$ and $\phi^\p(e)=\phi(f)$, $\phi^\p(f)=\phi(e)$,
then the relative Tutte polynomial defined in equation (\ref{eq1})
is the same for $\phi$ and $\phi^\p$ under conditions (\ref{Ieq1})
and (\ref{Ieq2}). Without loss of generality let us assume that
$\phi(e)=\phi^\p(f)=i$ and $\phi(f)=\phi^\p(e)=i+1$.

We now proceed to prove this fact. Let $\C$ be any contracting set
of $G$ with respect to $\H$ and $\D=G\setminus (\C\cup \H)$.
Clearly, by the definition of activities, if both edges $e$ and $f$
are in $\C$ or in $\D$, then they will never be compared to each
other with respect to their activities, therefore their label
comparison to any other regular edge stays the same under the two
labelings. This is even more obvious if one uses the equivalent
definition of activities in Remark \ref{equi-act-def}. Thus, in that
case, all regular edges will make the same contributions under the two
labelings. Furthermore, as noted at the end of Remark~\ref{r-active},
the labels of $e$ and $f$ are still not compared to each other
unless the triplet $(\C,e,f)$ or the triplet $(\C,f,e)$ has at least one
of the properties listed in Lemma~\ref{TL3}.
Without loss of generality we may assume
 that $e\in \C$, $f\in \D$, and the triplet satisfies at least one of
 the properties listed in Lemma~\ref{TL3}. By Lemma~\ref{TL3}, the set
$\C^{\p}:=\C\setminus \{e\}\cup\{f\}$ is also a contracting set, and
the triplet $(\C^\p, f,e)$ has the same properties as $(\C,e,f)$.
As noted in the proof of Theorem \ref{T_BR}  in \cite{BR} in a similar
situation, it suffices to show that for such pairs $(\C,\C^\p)$ the total
contribution of the two contracting sets is the same with respect to
$\phi$ and $\phi^{\p}$:
\begin{equation}\label{eq3}
W(G,c,\phi,C)+W(G,c,\phi,C^\p)-W(G,c,\phi^\p,C)-
W(G,c,\phi^\p,C^\p)\in I.
\end{equation}
There are three cases, depending on whether the triplet $(\C,e,f)$ has
only one or both properties listed in Lemma~\ref{TL3}. Let
$\lambda=c(e)$, $\mu=c(f)$.

{\bf Case 1.} $(\C,e,f)$ has property (ii) but not (i). In this case
$\C\cup \{f\}$ contains no cycle, since $\C$ is cycle free, $f$ is a
bridge in $(\C\cup\H\cup\{f\})\setminus \{e\}$, so a cycle contained
in $\C\cup \{f\}$ contains $f$, and a cycle containing $f$ in
$\C\cup\H\cup\{f\}$ contains $e$. We obtained that a cycle contained
in $\C\cup \{f\}$ must contain $e$, but we assume property (i) to be
false. Therefore $f$ is externally inactive with respect to $\C$, under
both labelings. Similarly, $\C^\p\cup\{e\}$ does not contain any cycle,
and so $e$ is externally inactive with respect to $\C^\p$, under
both labelings. Since we assume $\phi(e)<\phi(f)$, $f$ is internally
inactive with respect to $\C^\p$, under $\phi$. Similarly
$\phi^\p(f)<\phi^\p(e)$ implies that $e$ is internally
inactive with respect to $\C$, under $\phi^\p$. So far we obtained that $e$ and
$f$ contribute the following factors to the polynomials  $W(G,c,\phi,C)$,
$W(G,c,\phi,C^\p)$, $W(G,c,\phi^\p,C)$, $W(G,c,\phi^\p,C^\p)$:
$$
\begin{array}{c|cc}
&\C&\C^\p\\
\hline
\phi& (?)y_{\mu}& x_{\mu} y_{\lambda}\\
\phi^\p& x_{\lambda}y_{\mu}& (?)y_{\lambda}\\
\end{array}
$$
The question marks indicate that we are left to determine whether $e$ is
internally active with respect to $\C$ under $\phi$ and whether $f$ is
internally active with respect to $\C^\p$ under $\phi^\p$. The
information at hand does not allow to determine whether these edges are
active or not in the given context. However, we can show that the answer
is either simultaneously ``yes'' or simultaneously ``no'' to both
questions. In fact, $e$ is internally active with respect to $\C$ under
$\phi$ if and only if all edges of $\D$  that close a cycle containing
$e$ with $\C\cup\H\setminus\{e\}=\C^\p\cup\H\setminus\{f\}$
have larger label than the label of $e$. (This question has the same
answer under $\phi$ and $\phi^\p$.) Similarly $f$ is internally active
with respect to $\C^\p$ under $\phi^\p$ if and only if all edges of $\D$
that close a cycle containing $f$ with
$\C^\p\cup\H\setminus\{f\}=\C\cup\H\setminus\{e\}$ have larger label
than the label of $e$. We are comparing the labels of the same set of
deleting edges to the adjacent labels of $e$ and $f$.

If the answer to both remaining questions is ``no'', then the left hand
side of (\ref{eq3}) becomes zero, otherwise (\ref{eq3}) becomes
\begin{equation}
\label{Ecase1}
W_0(X_\lambda y_\mu +x_\mu y_\lambda - x_\lambda y_\mu -X_\mu
y_\lambda)\in I.
\end{equation}
Here $W_0$ is the product of
$\psi\left(\H_{\C}\right)=\psi\left(\H_{\C^\p}\right)$ and of
the variables associated to the regular edges
that are different from $e$ and $f$. In this case (\ref{eq3})
follows from conditions (\ref{Ieq1}) and (\ref{Ieq2}).

{\bf Case 2.} $(\C,e,f)$ has property (i) but not (ii). This case is
``the dual'' of the previous one and may be handled in a similar
manner. The edge $e$ cannot be a bridge in $\C\cup\H$, otherwise
we may show that $(\C\cup\H\cup\{f\})\setminus
\{e\}$ contains a cycle containing $e$ and $(\C,e,f)$ has property
(ii). Therefore there is at least one
cycle in $\C\cup\H$ containing $e$, and $e$ is internally inactive with
respect to $\C$ under both $\phi$ and $\phi^\p$. Similarly $f$ is
internally inactive with respect to $\C^\p$ under both $\phi$ and
$\phi^\p$. Using $\phi(e)<\phi(f)$ and $\phi^\p(f)<\phi^\p(e)$ we obtain
that $e$ and
$f$ contribute the following factors to the polynomials  $W(G,c,\phi,C)$,
$W(G,c,\phi,C^\p)$, $W(G,c,\phi^\p,C)$, $W(G,c,\phi^\p,C^\p)$:
$$
\begin{array}{c|cc}
&\C&\C^\p\\
\hline
\phi& x_{\lambda} y_{\mu}& x_{\mu}(??)\\
\phi^\p& x_{\lambda} (??)& x_{\mu} y_{\lambda}\\
\end{array}
$$
The double question marks indicate that we are left to determine whether $e$ is
externally active with respect to $\C^\p$ under $\phi$ and whether $f$ is
externally active with respect to $\C$ under $\phi^\p$.  Again the
answer to both questions the answer is simultaneously ``yes'' or ``no'':
to decide we must compare the labels of the remaining edges to the label
of $e$ resp.\ $f$, on the the unique cycle contained in $\C^{\p}\cup\{e\}$
  which is the same as the unique cycle contained in $\C\cup\{f\}$.
If the answer to both remaining questions is ``no'', then the left hand
side of (\ref{eq3}) becomes zero, otherwise (\ref{eq3}) becomes
\begin{equation}
\label{Ecase2}
W_0(x_\lambda y_\mu+x_\mu Y_\lambda -x_\lambda Y_\mu-x_\mu y_\lambda
)\in I,
\end{equation}
which follows from condition (\ref{Ieq1}).

{\bf Case 3.} $(\C,e,f)$ has both properties: (i) and (ii).
Now $\phi(e)<\phi(f)$ implies that $f$ is externally inactive with
respect to $\C$ and internally inactive with respect to
$\C^{\p}$ under $\phi$. Similarly, $\phi^\p(f)<\phi^\p(e)$ implies that
$e$ is internally inactive with respect to $\C$ and externally inactive
with respect to $\C^{\p}$ under $\phi^\p$. We obtain
that $e$ and
$f$ contribute the following factors to the polynomials  $W(G,c,\phi,C)$,
$W(G,c,\phi,C^\p)$, $W(G,c,\phi^\p,C)$, $W(G,c,\phi^\p,C^\p)$:
$$
\begin{array}{c|cc}
&\C&\C^\p\\
\hline
\phi& (?) y_{\mu}& x_{\mu} (??)\\
\phi^\p& x_{\lambda} (??)& (?) y_{\lambda}\\
\end{array}
$$
As in the previous two cases, the question marks indicate missing
activity determination. The argument used in Case 1 is applicable here
to show that $e$ is internally active with respect to $\C$ under $\phi$
if and only if $f$ is internally active with respect to $\C^\p$ under
$\phi^\p$. Hence either both single question marks need to be replaced
by upper case letters, or both need to be replaced by lower case
letters. Similarly, we may reuse the argument from Case 2 to show that
$e$ is externally active with respect to $\C^\p$ under $\phi$ if and
only if $f$ is externally active with respect to $\C$ under $\phi^\p$.
Hence either both double question marks need to be replaced
by upper case letters, or both need to be replaced by lower case
letters. Depending on the choice of letter case for the missing
variables we have $2\times 2=4$ possibilities: either
the left hand side of (\ref{eq3}) becomes zero, or (\ref{eq3})
becomes (\ref{Ecase1}), or (\ref{eq3}) becomes (\ref{Ecase2}), or
(\ref{eq3}) becomes
\begin{equation}
\label{Ecase3}
W_0(X_\lambda y_\mu+x_\mu Y_\lambda -x_\lambda Y_\mu-X_\mu y_\lambda
)\in I.
\end{equation}
We only need to observe that (\ref{Ecase3}) is the same as (\ref{Ieq1}).

We now turn to the necessity of the conditions of the theorem. The
necessity of (\ref{Ieq1}) follows from the double edge graph example
given in \cite{BR} (as shown in Figure \ref{RTuttefig1} earlier but with
both edges as regular edges). For the necessity of (\ref{Ieq2}), let us
look at the graph given in the following figure.
\begin{figure}[!htb]
\begin{center}
\includegraphics[scale=.6]{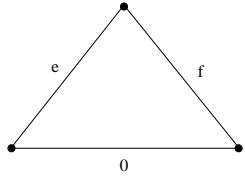}
\caption{\label{TE} A triangle graph $G$ with two regular edges and
one  zero edge.}
\end{center}
\end{figure}
In the figure, let $c(e)=\lambda$, $c(f)=\mu$,
$\phi(e)=1=\phi^\p(f)$, and $\phi(f)=2=\phi^\p(e)$. There are only
three choices for the contracting set $\C$: $\C_1=\{e\}$,
$\C_2=\{f\}$, or $\C_3=\{e,f\}$. With respect to $\C_1$, $e$ is
internally active under $\phi$ because deleting $f$ (since it has a
larger label) will make $e$ a bridge. $f$ is externally inactive
because it does not close a cycle with $\C_1$. Thus their combined
contribution under $\phi$ with respect to $\C_1$ is $X_\lambda
y_\mu$. With respect to $\C_2$, $e$ is externally inactive under
$\phi$ because it does not become a loop after $f$ is contracted
first, and $f$ is internally inactive because it is not a bridge as
no edge can be deleted first. Thus their combined contribution under
$\phi$ with respect to $\C_2$ is $y_\lambda x_\mu$. Finally, with
respect to $\C_3$, $e$ and $f$ are both internally inactive as they
are both non-bridge since there is no any edge to be deleted first.
Thus their combined contribution under $\phi$ with respect to $\C_3$
is $x_\lambda x_\mu$. By definition, we have
\begin{equation}
T_\H(G,\phi)=\sum_{1\le j\le 3} W(G,c,\phi,\C_j)\psi(\H_{\C_j})=
x_\lambda x_\mu \psi(H_0)+(X_\lambda y_\mu+x_\mu y_\lambda)\psi(H_1),
\end{equation}
where $H_0$ is the graph that contains only a zero loop  edge and
$H_1$ is the graph that contains only a simple zero edge. Similarly,
we have
\begin{equation}
T_\H(G,\phi^\p)=\sum_{1\le j\le 3} W(G,c,\phi^\p,\C_j)\psi(\H_{\C_j})=
x_\lambda x_\mu \psi(H_0)+(x_\lambda y_\mu+X_\mu y_\lambda)\psi(H_1).
\end{equation}
So in order to have $T_\H(G,\phi)=T_\H(G,\phi^\p)$ (with $\psi=1$),
we must have $X_\lambda y_\mu+x_\mu y_\lambda-x_\lambda y_\mu-X_\mu
y_\lambda\in I$. But this, together with (\ref{Ieq1}), implies
(\ref{Ieq2}). This finishes our proof of the theorem.
\end{proof}

From now on, we will assume $T_\H(G,\phi)$ is defined in
$\R[\Lambda]/I_1$ so that it is independent of the proper
labelings of $G$ and we will simply write $T_H(G)$ for
$T_H(G,\phi)$.  An immediate consequence of this fact is the
following corollary.

\begin{corollary}
$T_\H(G)$ can be computed via the following recursive formula:
\begin{equation}\label{recur1}
T_\H(G)=\left\{
\begin{array}{ll}
y_\lambda T_\H(G\setminus e) + x_\lambda T_\H(G/e),& \ {\rm
if}\ e\ {\rm is\  not\ a\ bridge\ nor\ a \ loop,}\\
X_\lambda T_\H(G/e), & {\rm
if}\ e\ {\rm is\  a\ bridge,}\\
Y_\lambda T_\H(G\setminus e), & {\rm if}\ e\ {\rm is\  a\
loop.}
\end{array}
\right.
\end{equation}
In the above, $e\not\in\H$ is a regular edge, $\lambda=c(e)$,
$G\setminus e$ is the graph obtained from $G$ by deleting $e$
and $G/e$ is the graph obtained from $G$ by contracting $e$.
\end{corollary}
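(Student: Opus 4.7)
The plan is to deduce the recursive formula directly from the labeling-independence of $T_\H(G)$ established in Theorem~\ref{DH1}, by choosing a convenient proper labeling $\phi$. Specifically, for a fixed regular edge $e\notin \H$, I pick $\phi$ so that $e$ receives the largest label among all regular edges, and compute $T_\H(G,\phi)$ with respect to this particular $\phi$. The argument then splits into the three cases of the recurrence.

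In the loop case, no contracting set can contain $e$ (since $\{e\}$ is itself a cycle), so $e\in\D$ for every contracting set $\C$; moreover $e$ is externally active because $\{e\}$ is a cycle in $\C\cup\{e\}$ and $e$ is trivially the smallest element of a singleton cycle. The map $\C\mapsto \C$ is a bijection between contracting sets of $G$ and of $G\setminus e$, and since $e$ lies only in its own cycle, activities of the other regular edges are unchanged; moreover $\H_\C$ is identical in $G$ and in $G\setminus e$. This yields $T_\H(G)=Y_\lambda T_\H(G\setminus e)$. Dually, if $e$ is a bridge then $\{e\}$ is a cocycle, so $e$ must lie in $\C$ (otherwise $\D$ would contain a cocycle); it is always internally active and contracting sets of $G$ correspond to contracting sets of $G/e$ via $\C\mapsto\C\setminus\{e\}$, giving $T_\H(G)=X_\lambda T_\H(G/e)$.

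For the main case, when $e$ is neither a loop nor a bridge, I split the sum (\ref{eq1}) according to whether $e\in\C$ or $e\in\D$. When $e\in\C$, by the rephrasing in Remark~\ref{r-active-d}, $e$ is internally active iff every $f\in\D$ that closes a cycle in $\{f\}\cup\C\cup\H$ through $e$ has larger label than $e$; since $e$ has the largest label, this would force no such $f$ to exist, hence $e$ to be a bridge of $G$, a contradiction. So $e$ is internally inactive, contributing $x_\lambda$. A straightforward verification shows that $\C\mapsto\C\setminus\{e\}$ is a bijection between these contracting sets of $G$ and those of $G/e$, preserving the activities of the remaining regular edges and the graph $\H_\C$. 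Similarly, when $e\in\D$, externality would require the unique cycle in $\C\cup\{e\}$ containing $e$ to have $e$ as its smallest element; with $e$ having the largest label this forces the cycle to be $\{e\}$, i.e.\ $e$ to be a loop, again a contradiction. So $e$ contributes $y_\lambda$ and $\C$ biject with the contracting sets of $G\setminus e$. Summing the two partial contributions yields $T_\H(G)=y_\lambda T_\H(G\setminus e)+x_\lambda T_\H(G/e)$.

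The main technical point, rather than the activity analysis of $e$ itself (which is quick once $e$ carries the largest label), is the bijection argument: one must check that for every contracting set $\C$ of $G$ with $e\in\C$, the set $\C\setminus\{e\}$ is a contracting set of $G/e$ with respect to $\H$ and that the activity of any other regular edge $e'$ computed in $G$ with respect to $\C$ agrees with its activity in $G/e$ with respect to $\C\setminus\{e\}$, and similarly for $e\in\D$ using $G\setminus e$. This amounts to verifying that contraction (resp.\ deletion) of a non-loop, non-bridge edge $e$ induces a cycle-and-cocycle preserving correspondence between the two matroids on the remaining edges; since $\psi$ is a block invariant (Definition~\ref{Dpsi}) and $\H_\C$ is literally the same object in both graphs, the values $\psi(\H_\C)$ also match. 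Once these bookkeeping details are in place, the three cases combine to give (\ref{recur1}).
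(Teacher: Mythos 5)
Your proposal is correct and follows essentially the same route as the paper: fix the regular edge $e$, give it the largest label (legitimate by the labeling independence of Theorem~\ref{DH1}), observe via Remark~\ref{equi-act-def} (equivalently Remark~\ref{r-active-d}) that $e$ is forced to be internally/externally inactive in the non-bridge, non-loop case and active with the appropriate capital variable in the bridge/loop cases, and then split the sum (\ref{eq1}) according to $e\in\C$ or $e\in\D$, matching contracting sets of $G$ with those of $G/e$ and $G\setminus e$. Your treatment is in fact slightly more explicit than the paper's about the bijection of contracting sets and the preservation of the other edges' activities and of $\H_\C$, which the paper leaves implicit in its chain of equalities.
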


\begin{proof}
We may assign $e$ the largest label. So in the case that $e$ is
not a bridge nor a loop, it is always inactive. Thus we have
\begin{eqnarray*}
T_\H(G)&=& \sum_{\C} W(G,c,\phi,\C)\psi(\H_\C)\\
&=& \sum_{e\in \C} W(G,c,\phi,\C)\psi(\H_\C)+\sum_{e\not\in \C}
W(G,c,\phi,\C)\psi(\H_\C)\\
&=& x_\lambda\sum_{\C^\p}
W(G/e,c,\phi,\C^\p)\psi(H_{\C/e})+y_\lambda\sum_{\C}
W(G\setminus e,c,\phi,\C\setminus e)\psi(\H_{\C})\\
&=& x_\lambda T_\H(G/e) + y_\lambda T_\H(G\setminus e),
\end{eqnarray*}
If $e$ is a bridge, then it has
to be in $\C$ (the cocycle $\{e\}$ cannot by contained in $\D$ by
Definition~\ref{TD1}) and it must be internally active by
Remark \ref{equi-act-def}. So its contribution for any chosen $\C$
is always $X_\lambda$. Finally, if $e$ is a loop, then it has to
be in $\D$ (the cycle $\{e\}$ cannot be contained in $\C$ by
Definition~\ref{TD1}) and is always externally active
by Remark \ref{equi-act-def}, so its contribution for any $\C$ will
be $Y_\lambda$.
\end{proof}

\begin{remark}\label{remark3.20}{\em
A careful reader may have realized that the ideal $I_1$ in Theorem
\ref{DH1} has to be replaced by the ideal $I_0$ generated by the
differences listed in Proposition \ref{T_BR} if we restrict ourselves to
the class of edge sets $\H$ containing only bridges and loops. This
is because in this case the activities of the regular edges will not be
affected by the zero edges at all (and the graphs $\H_\C$ are all
isomorphic).
}
\end{remark}

We conclude this section with an interesting observation that was
communicated to us by Sergei Chmutov~\cite{Ch-p}. In the ring
$\R[\Lambda]/I_1$, the relations (\ref{Ieq1}) and (\ref{Ieq2})
become equations and may be used to eliminate all
variables $X_{\lambda}$ and $Y_{\lambda}$. Indeed, first we should note
that all relations (\ref{Ieq1}) and (\ref{Ieq2}) generate the same ideal
as all relations (\ref{Ieq2}) and all relations of the form
\begin{equation}\label{Ieq1-bis}
\det\left(\begin{array}{ll}
X_\lambda & y_\lambda\\
X_\mu & y_\mu
\end{array}\right)
- \det\left(\begin{array}{ll}
x_\lambda & y_\lambda\\
x_\mu & y_\mu
\end{array}\right)\in I.
\end{equation}
Since  $I_1$ is a prime ideal (by Lemma~\ref{L_ip}), the ring $\R[\Lambda]/I_1$
is a domain which we may localize by the semigroup generated by all
variables $x_{\lambda}, y_{\lambda}$ ($\lambda\in\Lambda$).
In this localized ring, the relations
(\ref{Ieq1-bis}) may be rewritten as
\begin{equation}\label{CIeq1-bis}
\frac{X_\lambda-x_\lambda}{y_\lambda}
=
\frac{X_\mu-x_\mu}{y_\mu}\quad\mbox{for all $\lambda,\mu$,}
\end{equation}
whereas the relations (\ref{Ieq2}) may be rewritten as
\begin{equation}\label{CIeq2}
\frac{Y_\lambda-y_\lambda}{x_\lambda}
=
\frac{Y_\mu-y_\mu}{x_\mu}\quad\mbox{for all $\lambda,\mu$.}
\end{equation}
Introducing $X$ for the common value of all
$(X_\lambda-x_\lambda)/y_\lambda$ and $Y$ for the common value of all
$(Y_\lambda-y_\lambda)/x_\lambda$, we obtain the following.
\begin{theorem}
The ring $\R[\Lambda]/I_1$, localized by all variables $x_{\lambda},
y_{\lambda}$ ($\lambda\in\Lambda$),
is isomorphic to the polynomial ring $R[
x_{\lambda},y_{\lambda}\::\: \lambda\in \Lambda][X,Y]$, localized by all
variables $x_{\lambda}, y_{\lambda}$ ($\lambda\in\Lambda$).
Under this isomorphism,
each $X_{\lambda}$ corresponds to $x_{\lambda}+Xy_{\lambda}$ and
each $Y_{\lambda}$ corresponds to $y_{\lambda}+Yx_{\lambda}$.
\end{theorem}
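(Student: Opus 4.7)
The plan is to construct two mutually inverse ring homomorphisms between the two localized rings. Let $S$ denote the multiplicative set generated by $\{x_\lambda,y_\lambda:\lambda\in\Lambda\}$, viewed in either ring; the localizations are $S^{-1}(\R[\Lambda]/I_1)$ and $S^{-1}\R[x_\lambda,y_\lambda:\lambda\in\Lambda][X,Y]$.

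\medskip
\textbf{Step 1: build the forward map.} Define a ring homomorphism $\psi:\R[\Lambda]\to S^{-1}\R[x_\lambda,y_\lambda:\lambda\in\Lambda][X,Y]$ by $x_\lambda\mapsto x_\lambda$, $y_\lambda\mapsto y_\lambda$, $X_\lambda\mapsto x_\lambda+Xy_\lambda$, and $Y_\lambda\mapsto y_\lambda+Yx_\lambda$. A direct expansion of the $2\times 2$ determinants shows that $\psi$ annihilates every generator (\ref{Ieq1-bis}) and (\ref{Ieq2}) of $I_1$: for example, $\psi$ applied to (\ref{Ieq1-bis}) produces $(x_\lambda+Xy_\lambda)y_\mu-(x_\mu+Xy_\mu)y_\lambda-x_\lambda y_\mu+x_\mu y_\lambda=0$, and an analogous computation handles (\ref{Ieq2}). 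Thus $\psi$ descends to $\overline{\psi}:\R[\Lambda]/I_1\to S^{-1}\R[x_\lambda,y_\lambda:\lambda\in\Lambda][X,Y]$, and since the images of all $x_\lambda,y_\lambda$ are already units in the target, $\overline{\psi}$ extends uniquely to a homomorphism $\Psi:S^{-1}(\R[\Lambda]/I_1)\to S^{-1}\R[x_\lambda,y_\lambda:\lambda\in\Lambda][X,Y]$.

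\medskip
\textbf{Step 2: build the inverse.} In the polynomial ring $\R[x_\lambda,y_\lambda:\lambda\in\Lambda][X,Y]$, define $\rho$ by sending $x_\lambda,y_\lambda$ to themselves and $X\mapsto(X_{\lambda_0}-x_{\lambda_0})/y_{\lambda_0}$, $Y\mapsto(Y_{\lambda_0}-y_{\lambda_0})/x_{\lambda_0}$ for an arbitrary fixed $\lambda_0\in\Lambda$, with values in $S^{-1}(\R[\Lambda]/I_1)$. This is a homomorphism from a polynomial ring, so nothing needs to be checked at this stage beyond the fact that $x_{\lambda_0},y_{\lambda_0}$ are units in the target. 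Again $\rho$ extends to the localization, giving $\mathrm{P}:S^{-1}\R[x_\lambda,y_\lambda:\lambda\in\Lambda][X,Y]\to S^{-1}(\R[\Lambda]/I_1)$.

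\medskip
\textbf{Step 3: check mutual inversion.} The composition $\Psi\circ \mathrm{P}$ sends $X$ to $((x_{\lambda_0}+Xy_{\lambda_0})-x_{\lambda_0})/y_{\lambda_0}=X$, and similarly $Y\mapsto Y$, while fixing each $x_\lambda,y_\lambda$; hence $\Psi\circ \mathrm{P}$ is the identity. The composition $\mathrm{P}\circ \Psi$ sends $X_\lambda$ to $x_\lambda+((X_{\lambda_0}-x_{\lambda_0})/y_{\lambda_0})y_\lambda$, which equals $X_\lambda$ in $S^{-1}(\R[\Lambda]/I_1)$ precisely because the relation (\ref{Ieq1-bis}) gives $(X_\lambda-x_\lambda)y_{\lambda_0}=(X_{\lambda_0}-x_{\lambda_0})y_\lambda$ in $\R[\Lambda]/I_1$; the analogous computation for $Y_\lambda$ uses (\ref{Ieq2}). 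Thus $\mathrm{P}\circ \Psi$ is also the identity on generators and hence on the whole ring.

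\medskip
The only genuine content is the algebraic identity used in checking that $\psi$ kills the generators of $I_1$, and the use of these same identities (interpreted as equalities after localization) to verify that $\mathrm{P}\circ \Psi$ restores $X_\lambda$ and $Y_\lambda$; the rest is the universal property of localization. No substantial obstacle is expected, since the paper has already exhibited the relations (\ref{CIeq1-bis}) and (\ref{CIeq2}) that encode precisely this isomorphism after rewriting.
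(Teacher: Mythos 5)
Your proof is correct and follows essentially the same route as the paper: the paper's argument is precisely the substitution $X_\lambda\mapsto x_\lambda+Xy_\lambda$, $Y_\lambda\mapsto y_\lambda+Yx_\lambda$ obtained by rewriting the generators (\ref{Ieq1-bis}) and (\ref{Ieq2}) as the equalities (\ref{CIeq1-bis}) and (\ref{CIeq2}) in the localized ring, and you have simply made the two mutually inverse homomorphisms explicit. A minor bonus of your version is that it never needs Lemma~\ref{L_ip} (primeness of $I_1$), which the paper invokes only so that it may speak of localizing a domain.
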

Therefore the colored relative Tutte may be considered as an element of
a localized polynomial ring that does not need to be factorized by any
algebraic relation.
\begin{remark}{\em
For some other (non-relative) generalizations of the Bollob\'as-Riordan
colored Tutte polynomial of signed graphs, the idea of eliminating all
variables $X_{\lambda}$ and $Y_{\lambda}$ appears in the work of
Ellis-Monaghan and Traldi, see \cite[Corollary 5.2]{ET} and
\cite[Corollary 1.3]{Tr}. In the situations considered
by these sources the Tutte polynomial originally considered is not
an element of (a variant of) $\R[\Lambda]/I_1$, but of (a variant of)
$\R[\Lambda]/I_0$. Thus assuming that one may take the inverses of the
variables $x_{\lambda}$ and $y_{\lambda}$ in these situations involves
``giving up a certain degree of generality''. The same applies to the
relative Tutte polynomial in the situation mentioned in
Remark~\ref{remark3.20}.}
\end{remark}

%\bigskip
\section{Disconnected graphs and some applications}\label{s4}

\subsection{Disconnected graphs}
Our first example below shows that the colored Tutte polynomial
of a disconnected graph introduced by Bollob\'as and Riordan~\cite[Section 3.4]{BR} is
equivalent to the relative Tutte polynomial of a related
{\em connected} graph with a suitably chosen function $\psi$.

\begin{example}{\em
Let $\alpha_1,\alpha_2,\ldots$ be an infinite list of variables, let
$\R={\mathbb Z}[\alpha_1,\alpha_2,\ldots]$ and
let $\psi$ be the mapping that associates $\alpha_{k+1}$ to each graph that has $k$ edges. Consider a disconnected graph $G$ whose
edges are colored by a color set $\Lambda$. Select a vertex in each component and add a tree on the selected vertices and color each edge on this added tree with a color $0\not\in\Lambda$. Call the resulting connected graph $\widetilde{G}$. Let $\H$ be the set of the zero
colored edges. By Remark \ref{remark3.20}, the relative Tutte polynomial $T_\H(\widetilde{G})$
is exactly the disconnected Tutte polynomial
introduced by Bollob\'as and Riordan~\cite[Section 3.4]{BR} (as an
element of ${\mathbb Z}[\alpha_1,\alpha_2,\ldots][\Lambda]/I_0$).
}
\end{example}

Although the above example indicates a subtle trick that allows us to
reduce the study of disconnected graphs to connected ones, it is more
straightforward to generalize our notion of a relative Tutte polynomial
to disconnected graphs as follows. Let $G$ be any graph, and $\H$ a
subset of $E(G)$. We define the deleting and contracting sets as in
Definition~\ref{TD1}, keeping in mind that a cocycle in a disconnected
graph is a minimal set of edges whose removal increases the number of
connected components. We introduce colors and a labeling on the regular
edges as before. Regarding the map $\psi$, we amend
Definition~\ref{Dpsi} to require that the value of
$\psi$ should remain the same if we rearrange the blocks of a graph {\em
  within the same connected component}. To make this notion precise,
recall that {\em vertex splicing} is an operation that merges two
disjoint graphs by picking a vertex from each and identifying these
selected vertices, thus creating a cut point.
The opposite operation is {\em vertex splitting} that creates two disjoint
graphs by replacing a cut point $v$ with two copies $v_1$ and $v_2$, and
makes each block containing $v$ contain exactly one of $v_1$ and $v_2$.
\begin{definition}
{\em Let $G$ be a graph that has a cut point $u$. A {\em vertex pivot} is
  a sequence of vertex splitting and vertex splicing as follows. First
  we split $G$ by creating two copies of $u$ and two disjoint graphs
  $G_1$ and $G_2$. Then we take a vertex $v_1\in V(G_1)$ from the
  connected component of $u_1$ and a vertex $v_2\in V(G_2)$ in the
  connected component of $u_2$ and we merge $G_1$ and $G_2$ by
  identifying $u_1$ with $u_2$.
}
\end{definition}
We require that $\psi$ be a mapping that assigns to each graph an
element in an integral domain $\R$ and that the value of $\psi$ remains
unchanged if we perform a vertex pivot on its input.
For example, assigning to each
graph the number of its vertices is such an operation.
We may now adapt Lemma~\ref{HC} by observing that after dropping the
requirement of $G$ being connected, the graph  $\H_{\C^\p}$ is either
equal to $\H_{\C}$ or may be obtained from it by a single vertex pivot.
The definition of
the relative Tutte polynomial is then given by (\ref{eq1}) and
Theorem~\ref{DH1} holds for this Tutte polynomial as before.

\begin{example}{\em
Let $\alpha_1,\alpha_2,\ldots$ be an infinite list of variables, let
$\R={\mathbb Z}[\alpha_1,\alpha_2,\ldots]$ and
let $\psi$ be the mapping that associates $\alpha_k$ to each graph that
has $k$ connected components (so $\psi$ is invariant under the vertex pivot operation). Consider a disconnected graph $G$ with colored edges.  Let $G_1$, $G_2$, ..., $G_m$ be the connected components of $G$ and assume
that $\H_j$ is a subgraph of $G_j$ and let $\H=\cup_{1\le j\le
m}\H_j$. The relative Tutte polynomial $T_\H(G)$
is then
$\alpha_{m}\prod_{1\le j\le m} T_{\H_j}(G_j)$, since the
deletition-contraction process does not change the number of components
in $G$. If $\H$ contains only bridges and loops, then $I_1$ is replaced
with $I_0$. In particular, if $\H=\emptyset$, then we obtain exactly
$\alpha_{k(G)}\cdot T(G)$.
}
\end{example}

\begin{example}
{\em
Let $G$ be a graph whose edges are all regular of the same color $\lambda$.
Add a loop of color zero to each vertex of $G$ to get the graph
$\widetilde{G}$. Let $\psi$ be the mapping that associates $(-1)^e\cdot
(-x)^k$ to each graph having $e$ (zero-colored) edges and $k$ connected
components. Thus $\psi$ maps into the polynomial ring ${\mathbb
  Z}[x]$. Substituiting $1-x$ into $X_{\lambda}$ and $x_{\lambda}$
and $0$ into $Y_{\lambda}$ and $y_{\lambda}$ yields
$$
T_{\H}(\widetilde{G})\left|_{\begin{array}{l} \scriptstyle
    X_{\lambda}=x_\lambda=1-x\\
\scriptstyle    Y_{\lambda}=y_\lambda=0\end{array}}\right. =
(-1)^{v(G)}\cdot (-x)^{k(G)}T(G)(1-x,0)=(-1)^{v(G)-k(G)}\cdot
x^{k(G)}T(G)(1-x,0).
$$
Here $T(G)(x,y)$ is the ordinary Tutte polynomial of $G$, and so
$T_{\H}(\widetilde{G})$ generalizes the chromatic polynomial of $G$ (see
\cite[Cha. X, Section 4, Theorem 6]{B}).
}\end{example}

\begin{example}{\em
Let $G$ be a graph with a special edge set $\H$. Let $\Lambda_1$ and $\Lambda_2$ be two color sets. Assume that the edges of $E(G)\B \H$ are colored with colors from the set $\Lambda_1$ and that the edges of $\H$ are colored with colors from the set $\Lambda_2$. Define $\psi(\H_\C)$ to be the ordinary Tutte polynomial of the graph $\H_\C$ (as element of $\Z[\Lambda_2]/I_2$ where $I_2$ is the ideal generated by polynomials of the form (\ref{E_I11}) and (\ref{E_I12}) using colors from $\Lambda_2$), then the ordinary Tutte polynomial $T(G)$ (as an element of $\Z[\Lambda_1,\Lambda_2]/I$ where $I$ is the ideal generated by polynomials of the form (\ref{E_I11}) and (\ref{E_I12}) using colors from $\Lambda_1\cup\Lambda_2$) equals the relative Tutte polynomial $T_\H(G)$. }
\end{example}
We leave the verification of the above example to our reader.

\subsection{The set-pointed Tutte polynomial of Las Vergnas}

Given a matroid $M$ on the set $E$, {\em pointed} by a subset
$A\subseteq E$, Las Vergnas~\cite[Eq. (3.1)]{Ver} defines the {\em
 Tutte polynomial of $M$ pointed by $A$} as the $3$-variable polynomial
\begin{equation}
t(M;A; x,y,z)=
\sum_{X\subseteq E\setminus A}
(x-1)^{r(M)-r_M(X\cup A)}
(y-1)^{|X|-r_M(X)}z^{r_M(X\cup A)-r_M(X)}.
\end{equation}
Here $r(M)$ is the rank of $M$ and $r_M$ is the rank function.
As noted in \cite[p. 978]{Ver}, the choice $A=\emptyset$ yields the
the ordinary Tutte polynomial, whereas the choice $|A|=1$ yields
the Tutte polynomial of $M$ pointed by a single edge $e$, introduced
by Brylawski~\cite{Bry0,Bry}.

\begin{lemma}
\label{L_Ver}
Let $G$ be a graph with edge set $E$, pointed by the subset $A$ of
$E$. Color all elements of $E\setminus A$ with the same color $\lambda$
and all elements of $A$ with the color zero, i.e., let $\H$ be the
subgraph whose set of edges is $\H$. Let us also define
$\psi(G)=z^{r(G)}$ where $r(G)$ is the rank of $G$.
Then the Tutte polynomial
$t(M;A; x,y,z)$ of $G$ pointed by $A$ may be obtained from the relative
Tutte polynomial $T_\H(G)$ by substituting $x_{\lambda}\mapsto 1$,
$y_{\lambda}\mapsto 1$, $X_{\lambda}\mapsto x$ and $Y_{\lambda}\mapsto
y$.
\end{lemma}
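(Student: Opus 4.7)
The plan is to proceed by induction on $|E\setminus A|$, the number of regular edges. Both the substituted relative Tutte polynomial and $t(M;A;x,y,z)$ agree at the base case $E=A$: the former reduces to $\psi(G)=z^{r(M)}$ because the only contracting set is $\C=\emptyset$, and the latter reduces to $(x-1)^{r(M)-r_M(A)}z^{r_M(A)}=z^{r(M)}$ because the only surviving term is $X=\emptyset$. For the inductive step I would show that both sides satisfy the same deletion--contraction recursion with respect to any regular edge $e\in E\setminus A$.

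Applying the substitution $x_{\lambda}\mapsto 1$, $y_{\lambda}\mapsto 1$, $X_{\lambda}\mapsto x$, $Y_{\lambda}\mapsto y$ to the recursion (\ref{recur1}) yields $T_{\H}(G)=T_{\H}(G\setminus e)+T_{\H}(G/e)$ when $e$ is a regular non-loop non-bridge, $T_{\H}(G)=x\cdot T_{\H}(G/e)$ when $e$ is a regular bridge, and $T_{\H}(G)=y\cdot T_{\H}(G\setminus e)$ when $e$ is a regular loop. Thus the task reduces to establishing the same three recursions for $t(M;A;x,y,z)$ on regular edges.

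The derivation of the matroid recursion is the main computational step. For a regular edge $e$, I would split the defining sum of $t(M;A;x,y,z)$ according to whether $e\notin X$ or $e\in X$, writing $X=X'\cup\{e\}$ with $X'\subseteq E\setminus(A\cup\{e\})$ in the latter case. Using the rank identities $r_{M\setminus e}(S)=r_M(S)$ for $S\subseteq E\setminus\{e\}$, together with $r_{M/e}(S)=r_M(S\cup\{e\})-r_M(\{e\})$, $r(M/e)=r(M)-r_M(\{e\})$, and $r(M\setminus e)=r(M)$ precisely when $e$ is not a coloop, one finds that the first piece equals $t(M\setminus e;A;x,y,z)$ when $e$ is not a coloop and $(x-1)\cdot t(M\setminus e;A;x,y,z)$ when $e$ is a coloop, while the second piece equals $t(M/e;A;x,y,z)$ when $e$ is not a loop and $(y-1)\cdot t(M\setminus e;A;x,y,z)$ when $e$ is a loop (remembering that $M/e=M\setminus e$ whenever $e$ is a loop or a coloop). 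Summing these pieces gives exactly the three recursion formulas listed above, after which the induction hypothesis closes the argument.

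The main obstacle is the rank bookkeeping just described: the extra factor $z^{r_M(X\cup A)-r_M(X)}$ in the Las Vergnas polynomial, which is absent from the ordinary Tutte polynomial, must be tracked carefully to confirm that the exponent of $z$ shifts by the right amount when $e$ is contracted versus deleted, and that the exponents of $(x-1)$ and $(y-1)$ interact correctly with the exceptional cases where $e$ is a loop or a coloop. Once that bookkeeping is dispatched, the remainder of the argument is the standard reduction of a Tutte-type polynomial identity to matching recursions and a common base case.
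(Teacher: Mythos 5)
Your proof is correct and follows essentially the same route as the paper, which simply notes that both $T_\H(G)$ (after substitution) and $t(M;A;x,y,z)$ satisfy analogous deletion--contraction rules for edges $e\in E\setminus A$ and agree trivially when $A=E$. You merely supply the rank-function bookkeeping for the Las Vergnas recursion that the paper leaves to the reader, and that bookkeeping is accurate.
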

Indeed, both $T_\H(G)$ and $t(M;A; x,y,z)$ satisfy analogous
deletion-contraction rules for edges $e\in E\setminus A$, and the
statement is trivially true when $A=E$. Lemma~\ref{L_Ver} may be
generalized from graphs to all (set-pointed) matroids without any
substantial change.

As noted in \cite[p. 987]{Ver}, an ``almost equivalent'' problem to
computing the Tutte polynomial of a set-pointed matroid is the problem
of finding the Tutte polynomial~\cite[(5.1)]{Ver} of a {\em matroid
  perspective} (for a definition, see~\cite[p. 977]{Ver}). Applications
of the matroid perspective approach may be found
in~\cite{Ver00,Ver01,Ver02}. As a consequence of Lemma~\ref{L_Ver}, a
colored generalization of these applications could be considered,
using the relative Tutte polynomial.

Another generalization of the Tutte polynomial of a set-pointed matroid
is the Tutte polynomial of a matroid perspective sequence, studied by
Chaiken~\cite{Cha}, the term ``matroid perspective sequence'' is used by
Las Vergnas~\cite[p. 975]{Ver}. Finding a common generalization of the
relative Tutte polynomial of a colored matroid and of the Tutte
polynomial of a matroid perspective sequence seems an interesting
question for future research.

\subsection{The random-cluster model}

We conclude this section with an application of the relative Tutte polynomial to the random-cluster model considered in \cite{FK}.
A random-cluster model can be thought of as a graph $G(V,E)$ that is associated with a function $p:\ E\longrightarrow [0,1]$. We may think of $p(e)$ as the probability that the edge $e\in E$ ``survives an accident", and $q(e)=1-p(e)$ for the probability that the edge $e$
``breaks'' in an accident. Fortuin and Kasteleyn~\cite{FK} introduced
the following polynomial of the variable $\kappa$
as a cluster generating function $Z(G; p, \kappa)$:
\begin{equation}
\label{E_z}
Z(G; p, \kappa) = \sum_{C \subseteq E} p^C q^{E \setminus C}
\kappa^{k(C)}.
\end{equation}
Here $p^C$ is a shorthand for the product $\prod_{e\in C} p_e$,
$q^{E \setminus C}$ is a shorthand for the product $\prod_{e\in
  E\setminus C} q_e$, and $k(C)$ is the number of connected components
in the subset of edges $C$. The quantity
$p^C q^{E \setminus C}$ represents the probability
that exactly the subset of connections $C$ survives. Thus the function
$Z(G; p, \kappa)$ gives the expectation $E(\kappa^{k(G)})$, which is an indirect measure of the mean average number of connected components of the network after an accident happens. The following example concerns only a very special network setting since we do not intend to explore the general cases in this paper.

\begin{example}{\em
Let us consider the following problem. Suppose that a network of communications is to be built among a number of ``stations" and two types of communication methods are available. Type A is cheap and fast (such as the internet) so communication between two stations does not need to be directly so long as it can be routed through a sequence of stations connected by type A communication. However, type A communication may break in the event of some ``accident". On the other hand, type B communication is dependable and will not break in the event of an accident. However, it is costly, slow and hence communication between two stations using type B cannot be routed through a sequence of stations where at least two pairs of stations are communicated by type B method. If station 1 and station 2 cannot be communicated through a sequence of stations connected by type A communication, then either a direct type B communication has to be established between them, or they have to be routed to two stations 3 and 4 through type A communications first and then through a direct type B communication between station 3 and station 4, if the latter is cheaper. It thus makes sense to build a network with a two layer structure: a primary structure that uses type A communication to connect all stations and a secondary structure that would enable a type B communication between any two stations when needed after an accident strikes.

\medskip
This two layer structure network can then be represented by a graph $G$: the stations are the vertices of $G$ and a type A communication between two stations is a regular edge and a type B communication between two stations is a ``zero edge". Each zero edge $f$ comes with a positive weight $c_f$, namely the cost to operate that communication line (and there is a zero edge between any two vertices so the zero edges alone give us a complete graph), and each regular edge $e$ comes with a positive weight $p_e<1$, namely the probability that edge $e$ survives in the event of an accident (so $q_e=1-p_e$ is the probability that the edge breaks in the event of an accident). We will assume further that the events the regular edges break are all independent of each other. We wish to compute the mean cost of maintaining a functional network in the event of an accident. And this turns out to be a relative Tutte polynomial with a suitable choice of $\psi$ and the assignment of the variables to the regular edges.

\medskip
Since the zero edges alone give us a complete graph, no regular edges will be active. For a regular edge $e$, we will then assign $x_e=p_e$ and $y_e=q_e$ (we are using $e$ as the color of $e$ since every regular edge is treated as being colored differently). Notice that for a given contracting set $\C$, there is at least one edge between any two vertices in the graph $\H_\C$ (though $\H_\C$ may now have loop edges and multiple edges). If we remove all loop edges from $\H_\C$, and for any two vertices of $\H_\C$, keep only the edge incident to them that has the smallest weight among all edges incident to these two vertices, then we obtain a complete graph with the minimum total weight (the summation of all weights in the graph), which we will denote as $\H_\C^\p$. We then define $\psi(\H_\C)$ to be the total weight of $\H_\C^\p$. Of course $\psi$ so defined is invariant under the vertex pivot operation. $T_\H(G)$ then represents precisely the mean cost of operating a functional network in the event of an accident.
}
\end{example}

\section{Applications to Virtual Knot Theory}\label{s5}

In this section we apply the relative Tutte polynomial to
virtual knot theory. More specifically, we relate
the relative Tutte polynomial to the Kauffman bracket polynomial
of a virtual knot. It is well known that a classical link diagram
can be converted to a plane graph (called the {\em face graph} or the {\em Tait graph} of the diagram) where the edges of the face
graph are colored with the color set $\{+,-\}$. Furthermore, the
Tutte polynomial of this colored plane graph can be converted to
the Kauffman bracket polynomial via suitable variable
substitutions. However, for a virtual link diagram, the
corresponding face graph created using the old approach creates
some special ``zero" edges which cannot be handled via the
traditional deletion-contraction approach when one tries to define
or compute the Tutte polynomial of such a graph.
One way to overcome this difficulty is to change the virtual link diagram
into a ribbon graph where there are no more zero edges, and the
Bollob\'as-Riordan polynomial~\cite{BR2,BR3} of the ribbon graph may be
used to express the Jones polynomial \cite{Ch,CP,CV,Ka1,Ka2}.
The earlier results along these lines \cite{CP,Ka1,Ka2} are only applicable to
``checkerboard colorable" virtual link diagrams. Figure~\ref{nonchecker}
shows the virtual trefoil, which is not checkerboard colorable. The best
current generalization in this direction is due to Chmutov and Voltz
\cite{Ch,CV}, providing a formula for {\em all} virtual links.
Our approach is closer to Kauffman's~\cite{K2}, and uses
only the underlying graph structure of the face graph.

First, we need some preparation on how we will handle a graph with
only the zero edges. In other words, we would like to choose the
function $\psi$ in a way so that we may apply the relative Tutte
polynomial to a virtual link diagram.

\subsection{The face graph of a link diagram}

A regular link diagram $K$ can be viewed as a plane 4-valent graph and
from which one can obtain a so called ``face graph" $G$ of it. Here is a brief description of this process. One starts from the regular
projection $K$ and shade the regions in its projection either
``white'' or ``dark'' in a checkerboard fashion, so that no two
dark regions are adjacent, and no two white regions are adjacent (this can always be done for a 4-valent plane graph).
We usually consider the infinite region surrounding the knot
projection to be white. Note that as we move diagonally over a
knot crossing, we go from a white region to a white region, or
from a dark region to a dark region.  Next we construct a dual
graph of $K$ by converting the dark regions in $K$ into vertices
in a graph $G$ and converting the crossings in $K$ between two
dark regions into edges incident to the corresponding vertices in
$G$. So if we can move diagonally over a knot crossing from one
dark region to another, then these two dark regions and the
crossing will be represented in $G$ as two vertices connected by
an edge. Note that we may obtain parallel edges from some knot
projections. Now we have our unsigned graph. To obtain the signed
version, we look at each crossing in the knot projection.  If,
after the upper strand passes over the lower, the dark region is
to the left of the upper strand, then we denote this as a positive
crossing.  If the dark region is to the right of the upper strand,
we denote it as a negative crossing. See Figure \ref{crossingsign}.
\begin{figure}[!htb]
\begin{center}
\includegraphics[scale=0.5]{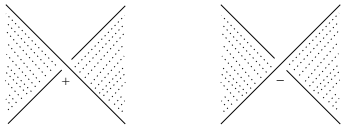}
\caption{The sign of a classic crossing with respect to a checkerboard coloring in a link diagram.}\label{crossingsign}
\end{center}
\end{figure}
Then our signed graph is
obtained by marking each edge of $G$ with the same sign as the
crossing of $K$ to which it corresponds. In the case of a virtual crossing, we cannot assign the $\pm$ to it so we will simply assign it the number $0$.
Figure \ref{nonchecker} shows such an example.
\begin{figure}[!htb]
\begin{center}
\includegraphics[scale=0.7]{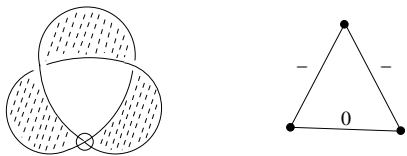}
\caption{The virtual trefoil knot (which is not checkerboard colorable) and its face graph.}\label{nonchecker}
\end{center}
\end{figure}

\subsection{Zero order of a plane graph}
As we have seen from the above discussion, a face graph of a virtual
link diagram will contain two kinds of edges: the edges that correspond
to classic crossings (the regular edges) and the edges that correspond
to virtual crossings (the zero edges). Since we cannot carry out the
typical crossing splitting operations at a virtual crossing (which are
used in defining all the knot polynomials), it means that we cannot
perform the typical contraction/deletion operation on the zero edges
(which is our motivation of introducing the relative Tutte
polynomials). In order to define an appropriate relative Tutte
polynomial (that can lead us to the Kauffman bracket polynomial) for a
face graph of a virtual link diagram, we have to choose a proper $\psi$
defined on graphs with only the zero edges (such graphs are face graphs
of virtual link diagrams with only virtual crossings). Let $G^\p$ be
such a plane graph (it is not necessarily connected). Since it is the
face graph of a (virtual) link diagram, its number of components (i.e.,
the number of components of the link) is a well defined number. We call
this number the {\em zero order} of the graph $G^\p$ and denote it by
$|G^\p|_0$. By a result due to Las Vergnas \cite{Ver2},
$|G^\p|_0=\log_2|T_{G^\p}(-1,-1)|+1$, where $T_{G^\p}(x,y)$ is the
ordinary (non-colored) Tutte polynomial of $G^\p$. $|G^\p|_0$ can also
be determined using the following simplification operations called {\em
  zero edge operations} or simply $0$-operations.

\begin{figure}[!htb]
\begin{center}
\includegraphics[scale=.6]{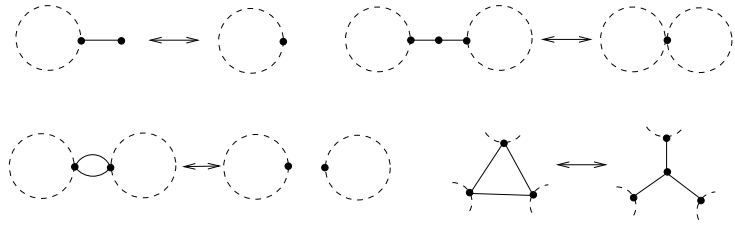}
\caption{\label{RTuttefig2} The $0$-operations.}
\end{center}
\end{figure}

\begin{lemma}
For any given plane graph $G$, there exists a finite sequence $S$
of $0$-operations that leads to a graph $G_S$ with only vertices.
Furthermore, the number of vertices in $G_S$ is equal to $|G^\p|_0$.
\end{lemma}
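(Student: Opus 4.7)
The plan is to prove the lemma in two parts: first, that some terminating sequence $S$ exists; and second, that whatever resulting graph $G_S$ we obtain, its vertex count equals the zero order $|G^\p|_0$.

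For termination, I would argue by induction on $|E(G)|$. Inspecting the $0$-operations in Figure~\ref{RTuttefig2}, each operation strictly decreases the number of zero edges (either removing a loop, or contracting/splitting along a non-loop zero edge and thereby eliminating at least one edge). Hence if we repeatedly apply $0$-operations to $G$, we cannot continue indefinitely, and any maximal sequence $S$ produces a graph $G_S$ that is edge-free, i.e., a disjoint union of isolated vertices. This part is essentially bookkeeping.

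For the count, the substantive step is to show that $|G^\p|_0$ is invariant under each single $0$-operation, and that for an edge-free plane graph the zero order equals the number of vertices. The base case is clear: an edge-free graph arises as the face graph of a virtual diagram consisting of disjoint unknotted loops, one per vertex, so its zero order equals $|V|$. For invariance, I would check each of the $0$-operations separately. The cleanest route is to use the Las Vergnas formula $|G^\p|_0 = \log_2 |T_{G^\p}(-1,-1)| + 1$ quoted above, and verify that $|T_{G^\p}(-1,-1)|$ is preserved under each operation, using the ordinary deletion--contraction recursion for the Tutte polynomial applied to the edge being operated on (loops contribute a factor $y=-1$, bridges a factor $x=-1$, and a non-loop non-bridge edge $e$ satisfies $T_{G^\p}=T_{G^\p/e}+T_{G^\p\setminus e}$, which at $(-1,-1)$ matches the corresponding combination under the $0$-operation). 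Alternatively, one can reason topologically: each $0$-operation corresponds to a local move on a virtual diagram with only virtual crossings that evidently preserves the underlying number of link components, hence preserves $|G^\p|_0$ by definition.

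Combining the two parts, we obtain $|G^\p|_0 = |G_S^\p|_0 = |V(G_S)|$, as desired. The main obstacle will be the case analysis for invariance: one must check every $0$-operation in Figure~\ref{RTuttefig2}, and in particular handle the loop operation, where planarity and the bookkeeping of newly created isolated components require the most care. Once this local verification is in hand, the inductive structure makes the lemma immediate.
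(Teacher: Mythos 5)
There is a genuine gap in the first half of your argument. The $0$-operations of Figure~\ref{RTuttefig2} are the face-graph translations of the Reidemeister moves on a diagram all of whose crossings are virtual, and the one coming from the third Reidemeister move is a triangle--star exchange, which does not change the number of edges at all; so your measure is not strictly decreasing under every operation, and a greedy ``maximal sequence'' need not even terminate in the way you describe. More seriously, even with a decreasing measure a maximal sequence only produces a graph to which no $0$-operation applies; to conclude that this graph has no edges you must show that every plane graph with at least one zero edge admits an applicable operation. That is exactly the nontrivial content of the existence claim, not bookkeeping: loop, bridge, series and parallel reductions alone do not suffice (a face graph isomorphic to $K_4$ has none of these configurations), and once the triangle--star move is included the graph-theoretic assertion is a $\Delta$--$Y$ reducibility statement for plane graphs of Epifanov type. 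The paper sidesteps all of this by arguing on the link-diagram side: because the diagram has only virtual crossings there are no over/under constraints, so some (possibly non-monotone) sequence of Reidemeister moves carries it to a disjoint union of embedded circles, whose face graph consists of isolated vertices.

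The second half of your argument is essentially the paper's: each $0$-operation is a Reidemeister move on the underlying diagram, hence preserves the number of link components, which is the zero order by definition, and an edge-free face graph has one vertex per unknotted circle. Your alternative verification through the evaluation $|T_{G^\p}(-1,-1)|$ from the quoted Las Vergnas formula could also be carried out, but be aware that the $0$-operations are not single-edge deletions or contractions: the moves coming from the second Reidemeister move remove a parallel pair or a series pair at once, and the triangle--star exchange must be checked separately, so the one-edge recursion you quote does not by itself yield the invariance. In any case, fixing the invariance step does not repair the existence step, which is where the real work lies.
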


\begin{proof}
The $0$-operations, translated into the link diagrams, are simply the Reidemeister moves. These moves never change the number of components in the link diagram and there exists
a sequence of Reidemeister moves that will take the original
diagram to disjoint circles (since there are no restrictions on the moves in this case), which is equivalent to a plane graph with only vertices.
\end{proof}

We now choose $\psi(G)=d^{n-1}$ where $n=|G|_0$.
Notice that $\psi$ so defined is invariant under vertex pivot, since the
vertex pivot in $G$, when translated in terms of the knot diagram, is
equivalent to taking connected sum of two link diagrams at two different
places, which of course does not affect the number of components in the
connected sum (which is always the sum of the numbers of components in
each link diagram minus one). Under this choice of $\psi$, the Tutte
polynomial for a disconnected (face) graph will then have the form
stated in the following proposition.

\begin{proposition}\label{pro5.2}
Let $G$ be a graph with connected components $G_1$, $G_2$, ..., $G_m$ and assume that $\psi$ is as defined above. Assume
that $\H_j$ is a subgraph of $G_j$ and let $\H=\cup_{1\le j\le
m}\H_j$, then $T_\H(G)$ is
\begin{equation}\label{e51}
T_\H(G)=d^{m-1}\prod_{1\le j\le m}T_{\H_j}(G_j),
\end{equation}
where $d$ is the same variable as in the definition of $\psi$.
\end{proposition}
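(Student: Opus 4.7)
The plan is to exploit the fact that the ingredients defining $T_{\H}(G)$ all factor naturally over the connected components, and to track precisely the discrepancy introduced by $\psi$.

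First, I would observe that a set $\C\subseteq E(G)\setminus \H$ is a contracting set of $G$ with respect to $\H$ if and only if $\C_j := \C\cap E(G_j)$ is a contracting set of $G_j$ with respect to $\H_j$ for each $j$. This is immediate from Definition~\ref{TD1} once one notes that every cycle and every cocycle of $G$ (in the disconnected sense, as minimal disconnecting sets) lies entirely within a single connected component; hence $\C$ is cycle-free in $G$ iff each $\C_j$ is cycle-free in $G_j$, and $\D := E(G)\setminus(\C\cup \H)$ contains no cocycle iff each $\D_j$ contains no cocycle in $G_j$. Consequently the sum over contracting sets $\C$ factors as a product of sums over contracting sets $\C_j$.

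Next, since the activities defining $w(G,c,\phi,\C,e)$ depend only on cycles in $\C\cup\{f\}$ and cocycles in $\D\cup\{e\}$ (Remarks~\ref{r-active} and~\ref{r-active-d}), all of which live inside one component, the weight of an edge $e\in E(G_j)\setminus \H_j$ relative to $\C$ in $G$ coincides with its weight relative to $\C_j$ in $G_j$. Thus
\begin{equation*}
W(G,c,\phi,\C) \;=\; \prod_{j=1}^m W(G_j,c,\phi,\C_j).
\end{equation*}

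The main step is to compare $\psi(\H_\C)$ with $\prod_j \psi((\H_j)_{\C_j})$. Since contraction and deletion are local operations on each component, $\H_\C$ is precisely the disjoint union of the graphs $(\H_j)_{\C_j}$. For graphs with only zero edges, the zero order is the number of link components of the associated all-virtual link diagram, which is manifestly additive under disjoint union of face graphs (the diagram splits into a disjoint union of link diagrams). Hence $|\H_\C|_0 = \sum_j |(\H_j)_{\C_j}|_0$, and therefore
\begin{equation*}
\psi(\H_\C) \;=\; d^{\sum_j |(\H_j)_{\C_j}|_0 - 1}
\;=\; d^{m-1}\prod_{j=1}^m d^{\,|(\H_j)_{\C_j}|_0 - 1}
\;=\; d^{m-1}\prod_{j=1}^m \psi\bigl((\H_j)_{\C_j}\bigr).
\end{equation*}
This already confirms that the block-invariance requirement needed for the disconnected version of Theorem~\ref{DH1} is satisfied (in particular, vertex pivots do not change the number of components of the associated link, which is the only input to $\psi$).

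Finally, combining the three pieces and swapping the sum with the product, I would write
\begin{align*}
T_\H(G) &= \sum_{\C} W(G,c,\phi,\C)\,\psi(\H_\C)
= d^{m-1}\sum_{(\C_1,\ldots,\C_m)}\prod_{j=1}^m W(G_j,c,\phi,\C_j)\,\psi\bigl((\H_j)_{\C_j}\bigr)\\
&= d^{m-1}\prod_{j=1}^m \sum_{\C_j} W(G_j,c,\phi,\C_j)\,\psi\bigl((\H_j)_{\C_j}\bigr)
= d^{m-1}\prod_{j=1}^m T_{\H_j}(G_j),
\end{align*}
which is the desired identity. The only genuinely knot-theoretic input is the additivity of the zero order under disjoint union of plane graphs, which is the point that needs the most care; everything else is a straightforward factorization.
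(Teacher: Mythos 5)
Your proposal is correct and follows essentially the same route as the paper's proof: decompose each contracting set $\C$ of $G$ componentwise into contracting sets $\C_j$ of the $G_j$, use additivity of the zero order over the disjoint union $\H_\C=\bigsqcup_j (\H_j)_{\C_j}$, and balance the powers of $d$ via $d^{\sum_j k_j-1}=d^{m-1}\prod_j d^{k_j-1}$. Your explicit verification that edge activities (and hence the weights $W$) are determined within a single component is a detail the paper leaves implicit, but it does not change the argument.
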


\begin{proof}
Each contracting set $\C$ of $G$ can be uniquely written as $\C=\cup_{1\le j\le m}\C_j$ where $\C_j$ is a contracting set in $G_j$. Assume that $|\H_{\C_j}|_0=k_j$, then $|\H_{\C}|_0=\sum_{1\le j\le m}k_j$. Thus the total contribution of $\H_\C$ to $T_\H(G)$ is $d^{-1+\sum_{1\le j\le m}k_j}$. On the other hand, each ${\H_j}_{\C_j}$ contributes a factor to $d^{k_j-1}$ to $T_{\H_j}(G_j)$. The proposition statement now follows from the equality $d^{-1+\sum_{1\le j\le m}k_j}=d^{m-1}d^{k_1-1}d^{k_2-1}\cdots d^{k_m-1}$.
\end{proof}

\begin{remark}
{\em It is worth noting that the zero order is defined for {\em plane
    graphs only}, thus the relative Tutte polynomial introduced in this
    section is also {\em defined on the class of plane graphs
    only}. This should not represent a problem, since the class of plane
    graphs is closed not only under deletion and contraction, frequently
    used in all Tutte polynomial calculations, but also under the vertex pivot
operation, introduced in connection with the map $\psi$. A
    generalization of the Bollob\'as-Riordan polynomial to minor-closed
    classes of matroids was developed by Ellis-Monaghan and
    Traldi~\cite{ET}. One would need to start with the class of plane
    graphs and such a generalized Bollob\'as-Riordan polynomial and then
    adapt the reasoning of the preceding sections.}
\end{remark}

\subsection{Converting the relative Tutte polynomial to the
Kauffman bracket polynomial}

The following theorem is the main motivation of this paper.

\begin{theorem}
Let $K$ be a virtual link diagram and let $G$ be its face graph obtained from $K$ where a virtual crossing in $K$ corresponds to a zero edge in $G$. Let $\H$ be the
subgraph of $G$ that contains all the zero edges, then $T_\H(G)$, as defined in (\ref{e51}),
equals the Kauffman bracket polynomial through the
following variable substitution:
\begin{eqnarray*}
&&X_+\rightarrow -A^{-3},\ X_-\rightarrow -A^3,\ Y_+\rightarrow
-A^3,\ Y_-\rightarrow -A^{-3}\\
&& x_+\rightarrow A,\ x_-\rightarrow A^{-1},\ y_+\rightarrow
A^{-1},\ y_-\rightarrow A,\\
&& d\rightarrow -(A^2+A^{-2}).
\end{eqnarray*}
\end{theorem}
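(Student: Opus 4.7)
The natural strategy is induction on the number of regular edges of $G$, which equals the number of classical crossings of $K$. In the base case, $G = \H$ has only zero edges, so the unique contracting set is $\C = \emptyset$ and $T_\H(G) = \psi(\H) = d^{|\H|_0 - 1}$. On the link side, $K$ has only virtual crossings, so by a finite sequence of $0$-operations (Reidemeister moves in the link picture) it reduces to a disjoint union of $|\H|_0$ plane circles, whose Kauffman bracket is $d^{|\H|_0 - 1}$. These agree.

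For the inductive step, I would pick any regular edge $e$ of color $\lambda \in \{+,-\}$, corresponding to a classical crossing of $K$, and apply the Kauffman skein relation at this crossing to get $\langle K \rangle = A\langle K_A\rangle + A^{-1}\langle K_B\rangle$, where $K_A$ and $K_B$ are the two smoothed diagrams, each with one fewer classical crossing. The key geometric fact, going back to Thistlethwaite~\cite{T0} and Kauffman~\cite{K2}, is that under the Tait convention the two smoothings correspond to the two graph operations at $e$: at a positive edge, the $A$-smoothing produces the face graph $G/e$ and the $B$-smoothing produces $G \setminus e$, and at a negative edge these roles are reversed. The inductive hypothesis then identifies each smoothed bracket with the relative Tutte polynomial of the appropriate face graph.

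It remains to match the result with the recursion (\ref{recur1}) under the substitutions $x_+ = A$, $y_+ = A^{-1}$, $x_- = A^{-1}$, $y_- = A$. When $e$ is neither a bridge nor a loop in $G$, both smoothings yield connected face graphs and the matching is immediate. When $e$ is a bridge, one smoothing disconnects $K$ into two pieces; the factor of $d$ for the extra link component is exactly the one provided by Proposition~\ref{pro5.2}, giving $T_\H(G \setminus e) = d \cdot T_\H(G/e)$ (the latter being a one-point join). The combined coefficient of $T_\H(G/e)$ is $x_\lambda + d y_\lambda$, which matches $X_\lambda$ under the substitutions, since $A + A^{-1}\bigl(-(A^2 + A^{-2})\bigr) = -A^{-3} = X_+$ and similarly for $X_-$. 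Dually, when $e$ is a loop, one smoothing introduces a disjoint unknot contributing an extra factor $d$, and the combined coefficient becomes $y_\lambda + d x_\lambda = Y_\lambda$. I expect the main obstacle to be the careful bookkeeping of the smoothing-versus-operation correspondence, particularly in the bridge and loop cases where the smoothed diagrams are either disconnected or carry an additional unknot component; once that topological correspondence is pinned down, the $\psi$-factors from Proposition~\ref{pro5.2} match up mechanically with the Kauffman bracket's multiplicative behavior on disjoint unions, and the induction goes through.
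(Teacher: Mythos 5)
Your proposal is correct and takes essentially the same approach as the paper: induction on the number of classical crossings, with the same base case and the same three-case analysis (ordinary edge, loop, bridge) matching the Kauffman skein relation at the crossing of $e$ against the deletion--contraction recursion (\ref{recur1}), using the fact that the $A$- and $B$-smoothings have face graphs $G/e$ and $G\setminus e$. The only cosmetic difference is that in the bridge and loop cases the paper accounts for the extra factor $d$ on the bracket side, via $\langle K_1\sqcup K_2\rangle = d\,\langle K_1\# K_2\rangle$ and the kink computation, applying induction to a single smoothing, whereas you apply induction to both smoothings and account for $d$ on the Tutte side via Proposition~\ref{pro5.2} (tacitly also using multiplicativity of $T_\H$ over one-point joins, an easy consequence of the block invariance of $\psi$); either way the verification reduces to the same identities $x_\lambda + d\,y_\lambda = X_\lambda$ and $y_\lambda + d\,x_\lambda = Y_\lambda$ under the substitution.
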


\begin{proof}
Let $K$ be any virtual link diagram with $n$ regular crossings and
let $G$ be its face graph. We will prove the theorem by induction on $n$.

For $n=0$, $\langle K\rangle=d^{m-1}$ by definition where $d=-(A^2+A^{-2})$ and $m$ is the number of components in the link diagram $K$. In
this case, $G$ is a plane graph with only zero edges and
$T_\H(G)=\psi(\H)=d^{m-1}$ as well by the definition of $\psi$. So we have $\langle K\rangle=T_\H(G)$.

Assume now that we have $\langle K\rangle=T_\H(G)$ for any $K$ with $n\ge 0$
regular crossings. We would like to show that for any $K$ with
$n+1$ regular crossings, this is still the case.

So let $K$ be any given virtual link diagram with $n+1$ regular
crossings and let $G$ be its face graph. Since $n+1\ge
1$, there exists at least one regular edge in $G$. Let $e$ be a
regular edge. In the following proofs, we will assume that $e$ is
positive. The case of $e$ being negative can be proved similarly
and is left to the reader. Again $\H$ is the set of all the zero edges of $G$.

Case 1. $e$ is not a bridge nor a loop in $G$. By the recursive
formula (\ref{recur1}), we have
$$
T_\H(G)=x_+T_\H(G/e)+y_+T_\H(G\setminus
 e)=AT_\H(G/e)+A^{-1}T_\H(G\setminus e).
$$
On the other hand, a similar recursive formula of the Kauffman
bracket about the crossing in $K$ that is corresponding to $e$
gives (see \cite{K3} for details about the properties of the bracket polynomial)
$$
\langle K\rangle=\langle \includegraphics[scale=.6]{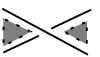}\rangle=A\langle \includegraphics
[scale=.6]{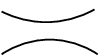}\rangle
+A^{-1}\langle \includegraphics[scale=.6]{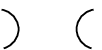}\rangle.
$$
But the face graph of the link diagram corresponding
to the $\includegraphics[scale=.6]{Asplit}$ split is $G/e$
and the face graph of the link diagram corresponding
to the $\includegraphics[scale=.6]{Bsplit}$ split is
$G\setminus e$, thus by our induction hypothesis, we have
$$
T_\H(G)=AT_\H(G/e)+A^{-1}T_\H(G\setminus
e)=A\langle \includegraphics[scale=.6]{Asplit}\rangle
+A^{-1}\langle \includegraphics[scale=.6]{Bsplit}\rangle
=\langle \includegraphics[scale=.6]{cross1}\rangle=\langle K\rangle.
$$

Case 2. $e$ is a loop. By (\ref{recur1}), we have
$$
T_\H(G)=Y_+T_\H(G\setminus e)=-A^3 T_\H(G\setminus e).
$$
On the other hand, we have
$$
\langle K\rangle=\langle \includegraphics[scale=.6]{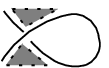}\rangle=A
\langle \includegraphics[scale=.6]{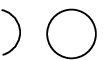}\rangle+
A^{-1}\langle\ \includegraphics[scale=.6]{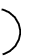}\ \rangle=
(Ad+A^{-1})\langle\ \includegraphics[scale=.6]{Lsplit2}\ \rangle=-A^3
\langle\ \includegraphics[scale=.6]{Lsplit2}\ \rangle.
$$
But the diagram $\ \includegraphics[scale=.6]{Lsplit2}\ $
obtained from $K$ (by deleting the loop) has face
graph $G\setminus e$ and the result follows from the induction
hypothesis again.

Case 3. $e$ is a bridge. By (\ref{recur1}) and our induction
hypothesis, we have
$$
T_\H(G)=X_+T_\H(G/e)=-A^{-3} T_\H(G/e)=-A^{-3}\langle K_0\rangle
$$
where $K_0$ is the link diagram obtained from $K$ by the
$\includegraphics[scale=.6]{Asplit}$ split. On the other hand,
we have
\begin{equation}
\langle K\rangle=\langle\includegraphics[scale=.6]{cross1}\rangle
=A\langle\includegraphics[scale=.6]{Asplit}\rangle
+A^{-1}\langle\includegraphics[scale=.6]{Bsplit}\rangle\label{br}
\end{equation}
Since $e$ is a bridge, the
$\includegraphics[scale=.6]{Bsplit}$ split above creates two
disjoint link diagrams
$K_1=\includegraphics[scale=.6]{Lsplit2}$ and
$K_2=\includegraphics[scale=.6]{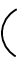}$. Furthermore,
$K_0=K_1\# K_2$. It is well known that $\langle K_1\# K_2\rangle=\langle K_1\rangle\cdot
\langle K_2\rangle$. Thus it follows that
$$
\langle\includegraphics[scale=.6]{Bsplit}\rangle=\langle K_1\sqcup
K_2\rangle=d\cdot\langle K_1\rangle\cdot \langle K_2\rangle=d\cdot\langle K_1\#
K_2\rangle=d\cdot\langle\includegraphics[scale=.6]{Asplit}\rangle.
$$
Combining this with (\ref{br}), we have
$$
\langle K\rangle=(A+dA^{-1})\langle\includegraphics[scale=.6]{Asplit}\rangle=
-A^{-3}\langle\includegraphics[scale=.6]{Asplit}\rangle=T_\H(G).
$$
This finishes our proof.
\end{proof}

\begin{example}{\em
The following is a simple virtual knot diagram $K$ with two virtual crossings marked (which are circled in the diagram), together with its face graph. The edges marked with $0$ correspond to the virtual crossings. At the far right are the remaining graphs at the end of contraction/deletion process of the regular edges. It is easy to see that $\psi(G^\p)=d$ and $\psi(G^{\p\p})=1$. Thus we have
\begin{eqnarray*}
T_\H(G)&=&y_+^2(X_++x_+)\psi(G^\p)+(x_+y_+X_++x_+^2y_++x_+^2Y_+)\psi(G^{\p\p})\\
&=& y_+^2(X_++x_+)d+(x_+y_+X_++x_+^2y_++x_+^2Y_+)\\
&=& (y_+^2d+x_+y_+)(X_++x_+)+x_+^2Y_+\\
&=& (-(A^2+A^{-2})A^{-2}+AA^{-1})(-A^{-3}+A)-A^5\\
&=& -A^{-3}+A^{-7}-A^5=\langle K\rangle.
\end{eqnarray*}
Since the writhe of the diagram is 3, it follows that the Jones polynomial of $K$ is $J_K(t)=(-A^{-3})^3(-A^{-3}+A^{-7}-A^5)|_{A=t^{-1/4}}=t+t^3-t^4$.
\begin{figure}[!htb]
\begin{center}
\includegraphics[scale=.6]{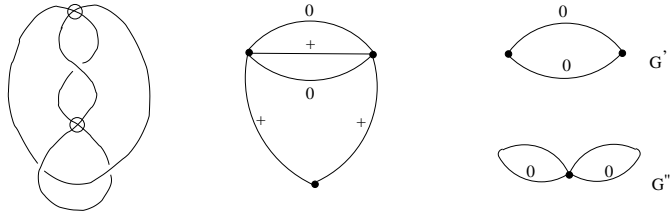}
\caption{\label{virtual2} A virtual knot diagram with two virtual crossings and its face graph.}
\end{center}
\end{figure}
}
\end{example}

\begin{remark}{\em
 The introduction and development of the relative Tutte polynomial and its connection to the Kauffman bracket (and hence Jones) polynomial of virtual links will make the generalization of some existing results in classical knot theory to virtual knot theory possible. For instance, the results of the authors on the colored Tutte polynomials of colored graphs through repeated tensor product operation will generalize to the relative Tutte polynomials without much difficulty \cite{DGH2}. Consequently, the Jones polynomials of virtual knots and links obtained through repeated tangle replacement operation (as discussed in \cite{DEZ}) can be computed in polynomial time. The authors intend to further explore these and other applications of the relative Tutte polynomials in the near future.
}
\end{remark}

\smallskip
\section*{Acknowledgement}
This work was partially supported by NSF grant DMS-0712958 to Y.~Diao
and by NSA grant H98230-07-10073 to G.~Hetyei. The authors wish to thank
Professor Louis Kauffman for introducing the virtual knot theory to
them, Professor Douglas West for providing some useful information on
graph operations, Professors Xian'an Jin and Fuji Zhang for bringing
the results of \cite{Ver2} to their attention, and Professor Sergei
Chmutov for advice and many useful comments. Finally, the authors wish
to express their gratitude to the anonymous referee for providing many
thoughtful suggestions to improve the paper.

\end{document}